\providecommand{\keywords}[1]
{
  \small	
  \textbf{\textit{Keywords---}} #1
}
\newtheorem{theorem}{Theorem}[section]
\newtheorem{lemma}{Lemma}[section]
\newtheorem{proposition}{Proposition}[section]
\newtheorem{definition}{Definition}[section]
\newtheorem{remark}{Remark}[section]
\newcommand{\R}{\mathbb R}
\newcommand{\ba}{\mathbf a}
\newcommand{\baa}{\mathbf A}
\def\d{\displaystyle}
\numberwithin{equation}{section}
\definecolor{lime}{HTML}{A6CE39}
\DeclareRobustCommand{\orcidicon}{
	\begin{tikzpicture}
	\draw[lime, fill=lime] (0,0) 
	circle [radius=0.16] 
	node[white] {{\fontfamily{qag}\selectfont \tiny ID}};
	\draw[white, fill=white] (-0.0625,0.095) 
	circle [radius=0.007];
	\end{tikzpicture}
	\hspace{-2mm}
}
\title[Time-Dependent Damped NLS]{Energy scattering for the unsteady damped nonlinear Schr\"odinger equation}
\author[Makram Hamouda]{Makram Hamouda\orcidA{}}
\author[Mohamed Majdoub]{Mohamed Majdoub\orcidB{}}
\address[M. Hamouda]{Department of Basic Sciences, Deanship of Preparatory Year and Supporting Studies, Imam Abdulrahman Bin Faisal University, P. O. Box 1982, Dammam, Saudi Arabia.}
\email{\sl \color{blue}{mmhamouda@iau.edu.sa }}
\address[M. Majdoub]{Department of Mathematics, College of Science, Imam Abdulrahman Bin Faisal University, P. O. Box 1982, Dammam, Saudi Arabia}
\address[M. Majdoub]{Basic and Applied Scientific Research Center, Imam Abdulrahman Bin Faisal University, P.O. Box 1982, 31441, Dammam, Saudi Arabia}
\email{\sl \color{blue}{mmajdoub@iau.edu.sa}}
\email{\sl \color{blue}{med.majdoub@gmail.com}}
\subjclass[2020]{35Q55, 35A01, 35B40, 35P25.}
\keywords{Damped NLS equation, loss dissipation, scattering.}
\begin{document}

\allowdisplaybreaks

	\begin{abstract}
		We investigate the large time behavior of the solutions to the nonlinear focusing Schr\"odinger equation with a time-dependent damping in the energy sub-critical regime. Under  non classical assumptions on the unsteady damping term, we prove some scattering results in the energy space. 
	\end{abstract}
	
	\maketitle

	\section{Introduction}
	\label{S1}

We consider in this article the nonlinear  Schr\"odinger (NLS) equation with an unsteady damping term:
\begin{equation}
\left\{ \begin{array}{ll}
     \label{main}
{\rm i}\partial_t u+\Delta u +{\rm i} {\mathbf a}(t) u=\mu \,|u|^{p-1}u, &\quad (t,x)\in (0,\infty)\times\R^N, \\
    u(0,x) = u_0(x), &\quad x \in \R^N, 
\end{array}
\right.
\end{equation}
where $p>1, \ \mu=\pm1$, and ${\mathbf a} \in \mathcal{C} ([0,\infty))$ is a nonnegative continuous function standing for the loss dissipation. 
 
Considering such a type of damping is not only of mathematical interests but it is also related to concrete real-life applications. For example, the unsteady damping is useful when we deal with Bose-Einstein condensate (BEC) models, or in quantum mechanics when the viscosity is a function of time, thus, giving rise to unsteady friction forces.  

The well-posedness and the scattering for \eqref{main} in the undamped case ($\ba (t) \equiv 0$) are mostly well understood, see e.g. \cite{Cazenave, Tao}. Therefore, we will focus in the reminder of this introduction to make the state of art on the damped regime.

Before getting into the long-time dynamics of \eqref{main}, we give an overview on the known results about the well-posedness in the energy space $H^1(\R^N)$.  For the constant damping case, namely  $\ba (t)=\ba>0$, we have global existence in the defocusing regime ($\mu=1$),  no matter the size of the damping, and for   $1<p<1+\frac{4}{(N-2)_+}$\footnote{Hereafter we use the notation $\kappa_+ :=\max(\kappa,0)$ with the convention $0^{-1}=\infty$.}; see \cite{Cazenave}. However, in the focusing regime ($\mu=-1$), the global existence holds true in the inter-critical case ($1+\frac{4}{N}\leq p<1+\frac{4}{(N-2)_+}$) for a sufficiently large damping $\ba$, see \cite{OhTo}. Note that in the $L^2$ sub-critical regime ($p<1+\frac{4}{N}$), the global existence is guaranteed without any restriction on the damping; see \cite[Proposition B.1]{HM-Damped NLS} which also covers the time-dependent damping. For the latter case, it is proven that a small damping is still leading to  blow-up  as it is the case for the undamped (or even the uniformly damped) NLS problem. However, when the size of the damping is beyond a certain {\it critical value}, the solution exists globally in time under some assumptions on the initial data and  the exponent $p$; see  \cite{HM-Damped NLS} and the references therein.   

Let us now make a short presentation on what it is known so far in the literature about the scattering for \eqref{main} in the whole range $1<p< 1+\frac{4}{(N-2)_+}$. For the constant damping case, the scattering for \eqref{main} is shown in \cite{VDD, In} for both focusing and defocusing regimes under suitable conditions. To the best of our knowledge, the asymptotic behavior of global solutions to \eqref{main} is not sufficiently studied in the literature for the time-dependent damping. Following the approach developed in \cite{Cazenave-21, Cazenave-17, Cazenave-18}, Bamri and Tayachi \cite{Tayachi-23} extend in an elegant way several global existence and scattering results to \eqref{main} for oscillating initial data and a damping function satisfying $t \,\ba(t) \underset{t \to \infty}{\sim} \gamma >0$.

Based on the global existence result in \cite{HM-Damped NLS}, we aim to study the scattering of  solutions to \eqref{main} in the sense of Definition \ref{Def-Scat} below. Assuming  $\underline{\ba}>0$ (see \eqref{ais} below), we prove the scattering in the focusing inter-critical regime. 

It is worth noticing that the aforementioned assumption on the damping term is somehow natural in view of the class of the initial data that we are considering. Indeed, we believe that there is a balance between the conditions on the damping and the initial data. In this context, the  damping $t \,\ba(t) \underset{t \to \infty}{\sim} \gamma >0$ considered in \cite{Tayachi-23} requires a  class of oscillating initial data. 

The present work is intended to extend the results in \cite{In}, where the constant damping  is investigated, to the time-dependent damping case. In fact,  our main target is to show the scattering for \eqref{main} for $H^1-$global solutions assuming \eqref{int-a} below. Note that our  objectives and approaches are  different from the ones in \cite{Tayachi-23}.

In the reminder of the article, we will only be concerned with equation \eqref{main} in the focusing regime which corresponds to $\mu =-1$.

The outline of the article is as follows. In Section \ref{SS1}, we state the main results together with some remarks after giving sense to the meaning of the scattering. Then, in Section \ref{S2}, we start by recalling some standard identities related to the solutions of \eqref{main} followed by several useful preliminary results. Finally, in Section \ref{S3}, we give the proofs of our main results.

\section{Main results}
\label{SS1}

Before we state our main results, we first introduce the following notations:
\begin{gather}
\label{A}
\baa(t)=\int_0^t\,\ba(s)ds,\\
\label{ais}
\underline{\ba}=\inf_{t>0}\,\left(\frac{\baa(t)}{t}\right).
\end{gather}
Obviously, one can see that $\underline{\ba}>0$   yields
\begin{equation}
\label{int-a}
\int_0^\infty \ba(s) ds=\infty.
\end{equation}
The condition \eqref{int-a} is necessary for obtaining the global existence to \eqref{main}  as it was claimed in \cite{HM-Damped NLS}. A natural question to ask is then the long-time behavior of global solutions, that is the scattering. In other words, when and how every global solution of \eqref{main} approaches the solution to the associated free equation in the energy space as $t\to  \infty$. 
\begin{definition}
\label{Def-Scat}
We say that a global solution $u$ to \eqref{main} scatters forward in time in $H^1(\R^N)$ if there exists $u^+\in H^1(\R^N)$ such that
\begin{equation}
\label{Scatt-def}
\displaystyle\lim_{t\to\infty}\,\|{\rm e}^{\baa(t)}{\rm e}^{-{\rm i}t\Delta}\,u(t)-u^{+}\|_{H^1}=0,
\end{equation}
where $\baa(t)$ is given by \eqref{A}.
\end{definition}

\begin{remark}
    {\rm One can intuitively see that the scattering in the sense of \eqref{Scatt-def} gives that ${\rm e}^{\baa(t)}\,\|\nabla u(t)\|_{L^2}$ is bounded in time. This observation is somewhat in relationship with the hypothesis \eqref{Bound-Scatt} below.}
\end{remark}

Let us recall the well-posedness theory in the energy space $H^1(\R^N)$ for the undamped NLS corresponding to \eqref{main} ($\ba(t)=0$). 
\begin{theorem}(\cite[Corlloray 4.3.4, p. 93]{Cazenave})
\label{LWP-NLS}
Let $N\geq 1$ and $\ba(t)\equiv 0$. Assume that
$1<p<1+\frac{4}{(N-2)_+}$. 
If $u_0\in H^1(\R^N)$ then there exist $T_{max}=T_{max}(\|u_0\|_{H^1}, N, p)>0$ and a unique maximal solution $u$ of \eqref{main}  such that 
\begin{equation}
\label{u-space}
u\in C((-T_{max}, T_{max}); H^1(\R^N))\cap L^q_{loc}((-T_{max}, T_{max}); W^{1,r}(\R^N)),
\end{equation}
for any admissible pair $(q,r)$ in the sense of Definition \ref{Admis} below. Moreover, we have the following blow-up criterion:
\begin{equation}
\label{Blow-Crit}
\text{if} \quad T_{max}<\infty \quad \text{then} \quad\displaystyle\lim_{t\to T_{max}}\|\nabla u(t)\|_{L^2(\R^N)}=\infty. 
\end{equation}
\end{theorem}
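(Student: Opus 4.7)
The plan is to prove the result by the standard contraction-mapping argument applied to the Duhamel formulation
\[
u(t) = e^{it\Delta}u_0 - i\mu \int_0^t e^{i(t-s)\Delta} |u(s)|^{p-1}u(s)\,ds,
\]
combined with Strichartz estimates in the energy space. First I would fix an admissible pair $(q,r)$ tailored to the nonlinearity: a convenient choice in the energy-subcritical range $1<p<1+\frac{4}{(N-2)_+}$ is
\[
r=p+1, \qquad q=\frac{4(p+1)}{N(p-1)},
\]
which satisfies $\frac{2}{q}+\frac{N}{r}=\frac{N}{2}$ with $q\geq 2$ precisely under the subcriticality assumption, and which is compatible with the Sobolev embedding $H^1(\R^N)\hookrightarrow L^{p+1}(\R^N)$.

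Second, I would set up the fixed-point problem on the complete metric space
\[
X_{T,M}=\bigl\{u\in C([-T,T];H^1)\cap L^q([-T,T];W^{1,r}):\ \|u\|_{L^\infty_tH^1}+\|u\|_{L^q_tW^{1,r}}\leq M\bigr\},
\]
with $M=2C\,\|u_0\|_{H^1}$ for the Strichartz constant $C$. Using the inhomogeneous Strichartz estimate together with the pointwise bound $|\nabla(|u|^{p-1}u)|\lesssim |u|^{p-1}|\nabla u|$ and Hölder's inequality in time and space, the nonlinear term can be estimated by
\[
\Bigl\|\int_0^t e^{i(t-s)\Delta}|u|^{p-1}u\,ds\Bigr\|_{L^\infty_t H^1\cap L^q_t W^{1,r}}\leq C\,T^{\theta}\,\|u\|_{L^q_t W^{1,r}}^{p},
\]
for some positive $\theta$ coming from the gap between $p$ and the $H^1$-critical exponent. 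A parallel Lipschitz estimate in a weaker norm (controlling only $u$, not $\nabla u$, to avoid the derivative of $|u|^{p-1}u$) yields the contraction property on a small interval $[-T,T]$ with $T$ depending only on $\|u_0\|_{H^1}$, $N$, and $p$. Banach's fixed-point theorem then produces the unique local solution.

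Third, I would promote this local solution to a maximal one by standard extension: define $T_{\max}$ as the supremum of existence times, deduce uniqueness in the full class \eqref{u-space} by comparing two candidate solutions via Strichartz on short slices and a continuity argument. For the blow-up criterion \eqref{Blow-Crit}, the crucial observation is that the local existence time depends only on $\|u_0\|_{H^1}$; thus if $T_{\max}<\infty$ but $\|\nabla u(t)\|_{L^2}$ remained bounded on $[0,T_{\max})$, then mass conservation plus the gradient bound would give uniform control of $\|u(t)\|_{H^1}$, allowing us to restart the local theory at a time arbitrarily close to $T_{\max}$ and extend beyond it, contradicting maximality.

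The main obstacle, and the place where the subcriticality hypothesis $p<1+\frac{4}{(N-2)_+}$ is indispensable, is engineering the exponents so that the Hölder time integration of the nonlinearity produces a strictly positive power $T^\theta$: this is what buys the smallness needed for the contraction. A secondary technical point is that one must estimate $\nabla(|u|^{p-1}u)$ and therefore either assume $p\geq 2$ to differentiate classically or, for $1<p<2$, invoke a fractional chain rule or an approximation argument to justify the Strichartz bound on the gradient. Once these two issues are dispatched, everything else is routine.
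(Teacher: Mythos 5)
Your proposal is correct and is essentially the argument behind the cited result: the paper itself offers no proof of Theorem \ref{LWP-NLS}, quoting it directly from Cazenave's book, where local well-posedness in $H^1$ is obtained by exactly the Strichartz-plus-contraction scheme you outline (fixed point in a ball of $C_tH^1\cap L^q_tW^{1,r}$ with the metric taken at the $L^2$ level, existence time depending only on $\|u_0\|_{H^1}$, and the blow-up alternative reduced to $\|\nabla u\|_{L^2}$ via mass conservation). Your only overcaution is the fractional chain rule remark: since $z\mapsto|z|^{p-1}z$ is $C^1$ for every $p>1$, the bound $|\nabla(|u|^{p-1}u)|\lesssim|u|^{p-1}|\nabla u|$ needs no such machinery.
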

\begin{remark}
\label{Rem2.2}
~{\rm \begin{itemize}
    \item[$(i)$] Among many other interesting results, the global existence and scattering for the critical value $\displaystyle p=1+\frac{4}{N-2}$ in the defocusing case is studied in \cite{CKSTT-08} for $N= 3$. 
    \item[$(ii)$] Using a change of unknowns $v(t,x)={\rm e}^{\baa(t)}u(t,x)$, where $\baa(t)$ is given by \eqref{A}, and the fact that $t \mapsto {\rm e}^{(1-p)\baa(t)}$ is bounded, Theorem \ref{LWP-NLS} holds true for \eqref{main-bis} below,  and consequently for the damped NLS \eqref{main}.
\end{itemize}
}
\end{remark}
The following scattering results, Theorems \ref{scat1}--\ref{scat3-bis}, can be seen as improvements of \cite{VDD,In}.
\begin{theorem}
\label{scat1}
Let $\d 1<p\leq 1+\frac{4}{N}$, $u_0 \in H^1(\R^N)$ and $u$ be the global solution to \eqref{main}. Assuming $\underline{\ba}>0$, then the solution $u$ scatters in $H^1(\R^N)$.
\end{theorem}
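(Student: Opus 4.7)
The plan is to reduce the problem to a Schr\"odinger equation with a decaying nonlinearity through the substitution $v(t,x):=e^{\baa(t)}u(t,x)$. Since $\baa(t)$ is independent of $x$, a direct computation shows that $v$ satisfies
\begin{equation*}
i\partial_t v+\Delta v=-e^{(1-p)\baa(t)}\,|v|^{p-1}v,\qquad v(0)=u_0,
\end{equation*}
and $e^{-it\Delta}v(t)=e^{\baa(t)}e^{-it\Delta}u(t)$, so proving \eqref{Scatt-def} is equivalent to showing that $e^{-it\Delta}v(t)$ converges in $H^1(\R^N)$ as $t\to\infty$. The hypothesis $\underline{\ba}>0$ delivers the two essential features of the effective coupling $\beta(t):=e^{(1-p)\baa(t)}$: the exponential decay $\beta(t)\leq e^{-(p-1)\underline{\ba}\,t}$, and the finite-integral identity
\begin{equation*}
\int_0^\infty \ba(s)\beta(s)\,ds=\frac{1-\lim_{t\to\infty}\beta(t)}{p-1}=\frac{1}{p-1},
\end{equation*}
obtained by writing $\ba\beta=-(p-1)^{-1}\beta'$.

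First I would establish the uniform bound $\sup_{t\geq0}\|v(t)\|_{H^1}<\infty$. Standard energy identities yield mass conservation $\|v(t)\|_{L^2}=\|u_0\|_{L^2}$ together with
\begin{equation*}
\frac{d}{dt}E_v(t)=\frac{(p-1)\ba(t)\beta(t)}{p+1}\|v(t)\|_{L^{p+1}}^{p+1},\qquad E_v(t):=\tfrac{1}{2}\|\nabla v\|_{L^2}^2-\tfrac{\beta(t)}{p+1}\|v\|_{L^{p+1}}^{p+1}.
\end{equation*}
The Gagliardo--Nirenberg inequality gives $\|v\|_{L^{p+1}}^{p+1}\leq C\|\nabla v\|_{L^2}^{\sigma}\|u_0\|_{L^2}^{p+1-\sigma}$ with $\sigma=N(p-1)/2\leq 2$. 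In the subcritical range $\sigma<2$, a Young-type inequality absorbs the potential term into the kinetic one, producing the coercivity $\|\nabla v\|_{L^2}^2\leq 4E_v(t)+C$; the resulting differential inequality $E_v'\lesssim \ba\beta\,(E_v+C)^{\sigma/2}$ is closed by the finite integral above through a Gronwall-type argument applied to $(E_v+C)^{1-\sigma/2}$.

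Once the a priori bound $\|v(t)\|_{H^1}\leq C$ is in hand, scattering follows from the Duhamel representation
\begin{equation*}
e^{-it\Delta}v(t)=u_0+i\int_0^t e^{-is\Delta}\,\beta(s)\,|v(s)|^{p-1}v(s)\,ds.
\end{equation*}
Since $e^{-is\Delta}$ is an isometry on $H^1$, convergence as $t\to\infty$ reduces to the integrability of $s\mapsto \beta(s)\|\,|v(s)|^{p-1}v(s)\|_{H^1}$. By Sobolev-type embeddings and (for high dimensions) interpolation against the conserved mass, together with the fractional chain rule, one has $\|\,|v|^{p-1}v\|_{H^1}$ bounded polynomially by $\|v\|_{H^1}$ and $\|u_0\|_{L^2}$, so the integrand is controlled by $C\beta(s)\leq C e^{-(p-1)\underline{\ba}\,s}$, which is integrable on $[0,\infty)$. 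A Cauchy-criterion argument then delivers $u^+\in H^1(\R^N)$ with $\|e^{-it\Delta}v(t)-u^+\|_{H^1}\to 0$.

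The main obstacle is the $L^2$-critical endpoint $p=1+4/N$, where $\sigma=2$ and Gagliardo--Nirenberg gives $\|v\|_{L^{p+1}}^{p+1}\leq C\|u_0\|_{L^2}^{4/N}\|\nabla v\|_{L^2}^2$, so the focusing term in $E_v$ cannot be absorbed into the kinetic one without an extra smallness assumption. Here the decay of $\beta(t)$ saves the argument: one picks $T>0$ large enough so that $C\beta(t)\|u_0\|_{L^2}^{4/N}<1/4$ for $t\geq T$, which restores the coercivity $\|\nabla v(t)\|_{L^2}^2\leq 4E_v(t)$ on $[T,\infty)$ and allows the Gronwall argument to close exactly as in the subcritical case. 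On the compact interval $[0,T]$, the $H^1$-boundedness of $v$ is provided by the global existence of $u$ assumed in the statement.
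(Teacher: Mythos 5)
Your first step is sound and is essentially the paper's own argument: the substitution $v={\rm e}^{\baa(t)}u$, the energy identity for $v$, Gagliardo--Nirenberg, and a Gr\"onwall-type closure using $\int_0^\infty \ba(s){\rm e}^{(1-p)\baa(s)}ds\le\frac{1}{p-1}$ reproduce the paper's conserved quantity \eqref{H-Id} and its Lemma \ref{gronwall} (whose case $\beta=1$ uses exactly the mechanism you invoke for $p=1+\frac4N$, namely that ${\rm e}^{(1-p)\baa(t)}\to0$, which follows from \eqref{int-a}). This yields $\sup_{t\ge0}{\rm e}^{\baa(t)}\|u(t)\|_{H^1}<\infty$, i.e. the hypothesis \eqref{Bound-Scatt} of Lemma \ref{L2.5}, since $\baa(t)\ge\underline{\ba}\,t$.

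The gap is in your second step. You estimate the Duhamel integral in $H^1$ pointwise in time, claiming $\||v|^{p-1}v\|_{H^1}$ is bounded polynomially by $\|v\|_{H^1}$ and $\|u_0\|_{L^2}$. This is not true for $N\ge2$: since $\nabla(|v|^{p-1}v)\sim|v|^{p-1}\nabla v$ and you only control $\nabla v$ in $L^2$, H\"older would require $|v|^{p-1}\in L^\infty$, which $H^1(\R^N)$ does not give; neither interpolation with the conserved mass nor the fractional chain rule avoids pairing the $L^2$ gradient against an $L^\infty$-type bound on $v$, and indeed for every $p>1$, $N\ge2$ there are $v\in H^1$ (with local power- or log-type singularities) for which $|v|^{p-1}v\notin H^1$. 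For $N\ge5$ and $N/(N-2)<p\le1+\frac4N$ even the $L^2$ part fails, because $2p>\frac{2N}{N-2}$ so $\|v\|_{L^{2p}}$ is not controlled by $\|v\|_{H^1}$. Hence the integrand $\beta(s)\||v(s)|^{p-1}v(s)\|_{H^1}$ need not be finite, and the exponential decay of $\beta$ cannot repair this. The paper avoids exactly this: from ${\rm e}^{\underline{\ba}t}\|u(t)\|_{H^1}\lesssim1$ and $\underline{\ba}>0$ it first deduces $\|u\|_{L^{\theta}((0,\infty);L^{p+1})}<\infty$ (Lemma \ref{L2.5}), and then the Duhamel/Cauchy-criterion argument is closed in dual Strichartz norms $L^{q_0'}W^{1,r_0'}$ via Lemmas \ref{L2.3} and \ref{L2.4}, where H\"older in space-time against the admissible $L^{q_0}W^{1,r_0}$ norm replaces the unavailable $L^\infty$ control. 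Replacing your second step by this Strichartz chain (your first step already supplies its input) fixes the proof; as written, the direct $H^1$ estimate is valid only for $N=1$, where $H^1(\R)\hookrightarrow L^\infty$.
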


\begin{remark}
\label{Rem2.3}
~\begin{itemize}{\rm
        \item[$(i)$] Note that for $\d 1<p<1+\frac{4}{N}$, the global existence is proven in \cite[Proposition B.1]{HM-Damped NLS}.
        \item[$(ii)$] For $\d p=1+\frac{4}{N}$, the global existence is shown in \cite[Theorem 1.6]{HM-Damped NLS} under the assumption that $\underline{\ba}$, defined by \eqref{ais}, is sufficiently large.
        \item[$(iii)$] Unlike the undamped case, where there is no scattering for the range 
        $1<p \le 1+\frac{2}{N}$; see \cite{Barab,  Strauss2, Tao-2004, Tsut-Yaj}, the scattering of \eqref{main} in the aforementioned range holds under the assumption $\underline{\ba}>0$. 
       \item[$(iv)$] We believe that the scattering result in Theorem \ref{scat1} holds true for a damping function which is only nonnegative for large time.}
    \end{itemize}
\end{remark}

\begin{theorem}
\label{scat3}
Let $\d 1+\frac{4}{N} < p<1+\frac{4}{(N-2)_+}$ and $u_0 \in H^1(\R^N)$. Suppose that $\underline{\ba}>0$. Then, there exists $\varepsilon_0=\varepsilon_0(p,N)>0$ such that if 
\begin{equation}
\label{small-new}
\| \nabla u_0\|_{L^2}\|u_0\|_{L^2}^{\frac{2+N-(N-2)p}{N(p-1)-4}} < \varepsilon_0,
\end{equation}
 the solution to \eqref{main} exists globally and satisfies
\begin{equation}
\label{Decay-u}
\|\nabla u(t)\|_{L^2(\R^N)}\leq C(\varepsilon_0)\, e^{-\baa(t)},\quad \forall \ t\geq 0,
\end{equation}
where $\baa(t)$ is given by \eqref{A}. 
In particular, the solution $u$ scatters in $H^1(\R^N)$.
\end{theorem}
\begin{remark}
~\begin{itemize}{\rm
        \item[$(i)$] It is well known that for the classical focusing intercritical NLS, the global existence and scattering are guaranteed under the smallness assumption on the initial data in $H^1$, see \cite[Theorem 6.2.1, p. 165]{Cazenave} and \cite[Remark 7.9.6, p. 253]{Cazenave}. 
        \item[$(ii)$] The situation in the damped case, namely \eqref{main}, is different. Indeed, for the global existence, we have a balance between choosing small initial data and large damping. See \cite[Theorem 1.6]{HM-Damped NLS} for the global existence in the case of large damping.
        \item[$(iii)$] The proof of global existence in Theorem \ref{scat3} relies solely on the new smallness condition introduced in \eqref{small-new}. Interestingly, the new condition \eqref{small-new} is weaker than the classical smallness hypothesis in $H^1$. In particular, one can consider, for example, initial data such that $\|u_0\|_{L^2}\gg 1$ and  $\|\nabla u_0\|_{L^2}\ll 1$.
        \item[$(iv)$]  The scattering part in Theorem \ref{scat3} is ensured by the smallness of $u_0$ in the sense of \eqref{small-new} together with $\underline{\ba}>0$. Notably, the condition $\underline{\ba}>0$ is only required for the scattering in view of Lemma \ref{L2.5} below.}
        \end{itemize}
\end{remark}

\begin{theorem}
\label{scat3-bis}
Let $\d 1+\frac{4}{N} < p<1+\frac{4}{(N-2)_+}$ and $u_0 \in H^1(\R^N)$.  Let $u$ be the global solution to \eqref{main}. Assume that $\underline{\ba}>0$ holds true. Furthermore, suppose that 
\begin{equation}
    \label{hyp-sup} \sup_{t \ge 0} \|u(t)\|_{L^{p+1}} < \infty.
\end{equation}
Then, for all $\varepsilon >0$, there exists $T>0$ such that
\begin{equation}
    \label{exp-scat} e^{-\underline{\ba}\, T} \|  u (T)\|_{H^1} < \varepsilon,
\end{equation}
where $\underline{\ba}>0$ is given by \eqref{ais}. In particular, the solution $u$ scatters in $H^1(\R^N)$.
\end{theorem}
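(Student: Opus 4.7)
The plan is to derive a dissipative energy identity tailored to the focusing damped equation. Multiplying \eqref{main} (with $\mu=-1$) by $\overline{u_t}$, taking real parts, and combining with the mass identity $\frac{d}{dt}\|u(t)\|_{L^2}^2=-2\ba(t)\|u(t)\|_{L^2}^2$, one obtains after integration by parts
\begin{equation*}
\frac{d}{dt}E(u(t))+2\ba(t)\,E(u(t))=\frac{p-1}{p+1}\,\ba(t)\,\|u(t)\|_{L^{p+1}}^{p+1},
\end{equation*}
where $E(u)=\frac{1}{2}\|\nabla u\|_{L^2}^2-\frac{1}{p+1}\|u\|_{L^{p+1}}^{p+1}$ is the focusing energy.

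Solving this first-order linear ODE with the integrating factor $e^{2\baa(t)}$ yields
\begin{equation*}
e^{2\baa(T)}E(u(T))=E(u_0)+\frac{p-1}{p+1}\int_0^T\ba(t)\,e^{2\baa(t)}\,\|u(t)\|_{L^{p+1}}^{p+1}\,dt.
\end{equation*}
Invoking hypothesis \eqref{hyp-sup}, the integral on the right is bounded by $C(e^{2\baa(T)}-1)$, so $E(u(T))$ is uniformly bounded in $T$. Using $\|\nabla u\|_{L^2}^2=2E(u)+\frac{2}{p+1}\|u\|_{L^{p+1}}^{p+1}$ together with the mass decay $\|u(t)\|_{L^2}=e^{-\baa(t)}\|u_0\|_{L^2}$, this upgrades to $\sup_{t\ge 0}\|u(t)\|_{H^1}\le C$. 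Since $\baa(T)\ge\underline{\ba}\,T$, we immediately deduce $e^{-\underline{\ba}T}\|u(T)\|_{H^1}\le C\,e^{-\underline{\ba}T}\to 0$ as $T\to\infty$, which is \eqref{exp-scat}.

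For the scattering conclusion, I would pass to the renormalized profile $v(t)=e^{\baa(t)}u(t)$, which satisfies ${\rm i}v_t+\Delta v=-e^{-(p-1)\baa(t)}|v|^{p-1}v$, and check the Cauchy criterion $\|e^{-{\rm i}t\Delta}v(t)-e^{-{\rm i}\tau\Delta}v(\tau)\|_{H^1}\to 0$ by applying Strichartz estimates to the Duhamel integrand $e^{-{\rm i}s\Delta}\,e^{\baa(s)}\,|u(s)|^{p-1}u(s)$. The missing ingredient is a genuine decay rate for $\|u(t)\|_{L^{p+1}}$: interpolating the exponential $L^2$ decay against the Sobolev embedding $H^1\hookrightarrow L^{2N/(N-2)_+}$ and reinjecting the outcome into the energy identity via a bootstrap progressively upgrades the $H^1$ bound toward $\|\nabla u(t)\|_{L^2}\lesssim e^{-\baa(t)}$, which is exactly what is needed so that $e^{\baa(s)}\||u|^{p-1}u\|$ lies in an admissible dual Strichartz space. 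The main obstacle is to close this bootstrap across the whole inter-critical range $1+\frac{4}{N}<p<1+\frac{4}{(N-2)_+}$ and to handle the $H^1$ chain rule on $|u|^{p-1}u$ when $p$ is close to the energy-critical exponent.
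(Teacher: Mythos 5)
Your derivation of \eqref{exp-scat} is correct, and it actually takes a different (and in one respect stronger) route than the paper: you integrate the damped energy identity (equivalently \eqref{tildeH}--\eqref{Ham-u}) and use \eqref{hyp-sup} to bound the source term by $C(e^{2\baa(T)}-1)$, obtaining a uniform bound on the energy and hence on $\sup_{t\ge 0}\|u(t)\|_{H^1}$; together with $\baa(T)\ge \underline{\ba}\,T$ this gives $e^{-\underline{\ba}T}\|u(T)\|_{H^1}\to 0$, not merely the $\liminf$ statement. The paper instead proves only $\liminf_{t\to\infty}\mathbf{I}(u(t))\le 0$ (Lemma \ref{L2.9}, a contradiction argument using \eqref{E-Id} and \eqref{int-a}) and combines it with \eqref{hyp-sup} and \eqref{M-Id} to get \eqref{liminf-1}. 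Your observation that \eqref{hyp-sup} in fact forces a uniform $H^1$ bound is a legitimate alternative and complements the paper's remark that \eqref{hyp-H1} implies \eqref{hyp-sup}.

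The genuine gap is the scattering conclusion, which is part of the theorem and which you explicitly leave open. Your plan requires upgrading to the pointwise decay $\|\nabla u(t)\|_{L^2}\lesssim e^{-\baa(t)}$ via a bootstrap you do not close; this is far more than is needed. The paper finishes with a much softer argument that is available to you exactly at this point: fix $\varepsilon>0$ and take $T$ as in \eqref{exp-scat}; since $r_0=p+1<\frac{2N}{(N-2)_+}$, Sobolev embedding gives $\|e^{it\Delta}u(T)\|_{L^{r_0}}\lesssim \|u(T)\|_{H^1}$, and because the free propagator \eqref{U-a} already carries the factor $e^{-\baa(t)}\le e^{-\underline{\ba}t}$, which belongs to $L^{\theta}(T,\infty)$ with norm $\simeq e^{-\underline{\ba}T}$, one gets $\|U_{\ba}(\cdot)u(T)\|_{L^{\theta}(T,\infty;L^{r_0})}\lesssim e^{-\underline{\ba}T}\|u(T)\|_{H^1}\lesssim\varepsilon$. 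This places you in the small-Strichartz-data regime of Proposition \ref{Oh-To} with initial time $T$, whose conclusion \eqref{p-r0-est} combined with Lemma \ref{L2.4} yields scattering in $H^1(\R^N)$ — no decay of the nonlinear solution, no chain-rule difficulties near the critical exponent, and no bootstrap are needed. As written, your proposal proves \eqref{exp-scat} but does not prove the scattering statement, so the proof of the theorem is incomplete.
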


\begin{remark}
    {\rm Obviously, in view of Sobolev embedding, the assumption \eqref{hyp-sup} is fulfilled once we have 
    \begin{equation}
        \label{hyp-H1} \sup_{t \ge 0} \|u(t)\|_{H^1} < \infty.
    \end{equation}
    Note that the hypothesis \eqref{hyp-H1} appears e.g. in \cite{Bourgain-1996, Tao-2004}.} 
\end{remark}
\begin{remark}
    {\rm As stated in the beginning of this section, all the above energy scattering results require that $\underline{\ba}>0$, where $\underline{\ba}$ is given by \eqref{ais}. It is then natural to investigate the case $\underline{\ba}=0$ for both global existence and scattering goals. In this direction and for oscillating initial data, the recent article \cite{Tayachi-23} treats a class of dampings behaving like  $\ba(t)\sim \frac{\gamma}{1+t},\, t\gg  1, \gamma>0$.  Clearly, the aforementioned damping fulfills $\underline{\ba}=0$. In view of the above observations, it is interesting to study the following equation:
    \begin{equation}
        \label{Main-New}
       {\rm i}\partial_t u+\Delta u + \frac{{\rm i}a}{(1+t)^{\theta}}\, u=-|u|^{p-1}u,\;\;\; a,\theta\geq 0.
    \end{equation}
  The global existence and scattering for \eqref{Main-New} is under investigation  \cite{HM-New}.  }
    \end{remark}

\section{Useful tools \& Auxiliary results}
\label{S2}
	
To state our main results in a clear way, we define the following quantities:
\begin{gather}
\label{Mass}
{\mathbf M}(u(t))=\int_{\R^N}\,|u(t,x)|^2\,dx,
\vspace{.3cm}\\
\label{Ener}
{\mathbf E}(u(t))=\|\nabla u(t)\|_{L^2}^2-\frac{2}{p+1}\int_{\R^N}\, |u(t,x)|^{p+1}\,dx,\vspace{.3cm}\\
\label{Iu}
{\mathbf I}(u(t))=\|\nabla u(t)\|_{L^2}^2-\int_{\R^N}\, |u(t,x)|^{p+1}\,dx.\vspace{.3cm}\\
\end{gather}

Some crucial relationships between the above quantities are summarized in the following.
\begin{proposition}
\label{Rela}
Let $u$ be a sufficiently smooth solution of \eqref{main} on $0\leq t\leq T$. Then, we have
\begin{gather}
\label{M-Id}
{\mathbf M}(u(t))={\rm e}^{-2\baa(t)}{\mathbf M}(u_0),\vspace{.3cm}\\
\label{E-Id}
\frac{d}{dt}{\mathbf E}(u(t))=-2\ba(t) {\mathbf I}(u(t)).\vspace{.3cm}\\
\end{gather}
\end{proposition}

\begin{proof}[{Proof of Proposition \ref{Rela}}]
Although we deal here with a time-dependent damping, the proof mimics the same steps performed in \cite[Lemma 1]{Tsut1} where the constant damping is considered. The details are hence omitted.
\end{proof}

In order to facilitate the reading of the paper, we collect here some mathematical tools. First, we state a Gr\"onwall type inequality.
\begin{lemma}
    \label{gronwall}
    Let $f: [0,\infty) \to [0,\infty)$ be a nonnegative continuous function satisfying 
    \begin{equation}
        \label{gronw-ineq}
        f(t) \le C + g(t) (f(t))^{\beta} + \int_0^t h(s)(f(s))^{\beta} ds, \quad \forall \ t \ge 0,
    \end{equation}
    where $C$ is a positive constant, $0<\beta\le 1$ and $g(t), h(t)$ are two nonnegative continuous functions on $[0,\infty)$. Furthermore, assume that 
    \begin{equation}
        \label{bound-h}
        t\mapsto \int_0^t h(s) ds \ \text{is bounded on}\  [0,\infty),
    \end{equation}
    and either
    \begin{equation}
        \label{limit-0}
       \beta=1, \quad \text{and}\quad  \lim_{t \to \infty} g(t)=0,
    \end{equation}
    or
    \begin{equation}
        \label{beta-g}
       \beta<1, \quad \text{and}\quad  g \ \text{is bounded on}\ [0,\infty).
    \end{equation}
    Then, the function $f$ is bounded on $[0,\infty)$.
\end{lemma}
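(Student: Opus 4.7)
My plan is to split the argument into the two hypotheses \eqref{limit-0} and \eqref{beta-g}, handling each by a different reduction. For case \eqref{limit-0} ($\beta=1$ and $g(t)\to 0$), I would first choose $T_0 \ge 0$ such that $g(t)\le 1/2$ for all $t\ge T_0$, which is possible by the limit condition. Since $f$ is continuous, it is bounded by some $M_0$ on the compact interval $[0,T_0]$. For $t \ge T_0$, writing $H:=\sup_{t\ge 0}\int_0^t h(s)\,ds < \infty$ by \eqref{bound-h}, the inequality \eqref{gronw-ineq} becomes $f(t)\le C+\tfrac{1}{2}f(t)+\int_0^t h(s)f(s)\,ds$. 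Absorbing $\tfrac{1}{2}f(t)$ on the left and splitting the integral at $T_0$ gives
\begin{equation*}
f(t) \le 2C+2M_0 H + 2\int_{T_0}^t h(s) f(s)\,ds, \qquad t\ge T_0,
\end{equation*}
and classical Grönwall on $[T_0,\infty)$ finishes the case with the bound $f(t)\le (2C+2M_0 H)\exp(2H)$.

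For case \eqref{beta-g} ($\beta<1$ and $g$ bounded by some $G$), the right-hand side still contains $f(t)^\beta$, so classical Grönwall does not apply directly. The natural trick is to pass to the monotone envelope
\begin{equation*}
F(t):=\sup_{0\le s\le t}f(s),
\end{equation*}
which is finite by continuity of $f$. For any $s\le t$ one has $f(s)^\beta \le F(t)^\beta$, so \eqref{gronw-ineq} together with \eqref{bound-h} yields $f(s)\le C+(G+H)F(t)^\beta$. Taking the supremum over $s\in[0,t]$ gives the purely algebraic inequality $F(t)\le C+(G+H)F(t)^\beta$. Since $\beta<1$, the continuous function $\varphi(x):=x-(G+H)x^\beta$ satisfies $\varphi(x)\to+\infty$ as $x\to\infty$, so there exists $M>0$ with $\varphi(x)>C$ whenever $x>M$. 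The inequality $\varphi(F(t))\le C$ then forces $F(t)\le M$, which bounds $f$ uniformly.

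The main obstacle is precisely the sublinear case $\beta<1$: the presence of $f^\beta$ inside the integral prevents a direct Grönwall argument, and the required nonlinear (Bihari-type) estimate is obtained cleanly only after passing to the monotone envelope $F=\sup_{[0,t]} f$ and exploiting that $x^\beta$ grows strictly slower than $x$ as $x\to\infty$. The case $\beta=1$ is comparatively routine once the hypothesis $g(t)\to 0$ allows us to absorb the ``instantaneous'' term $g(t)f(t)$ into the left-hand side and reduce to the classical linear integral inequality.
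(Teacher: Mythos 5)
Your proof is correct. In the case $\beta=1$ you argue exactly as the paper does: absorb $g(t)f(t)$ using $g\le 1/2$ beyond some $T_0$, control the contribution of $[0,T_0]$ by continuity, and finish with the classical Gr\"onwall inequality together with \eqref{bound-h}. In the case $\beta<1$, however, you take a genuinely different route. The paper linearizes the sublinear terms: it applies a Young-type inequality $ab^\beta\le \tfrac12 b+(1-\beta)(2\beta)^{\beta/(1-\beta)}a^{1/(1-\beta)}$ to the term $g(t)f(t)^\beta$ and the convexity bound $x^\beta\le \beta x+1-\beta$ inside the integral, arriving at a linear integral inequality to which standard Gr\"onwall applies, with \eqref{bound-h} then giving the uniform bound. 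You instead pass to the monotone envelope $F(t)=\sup_{0\le s\le t}f(s)$ and use only $g\le G$, $\int_0^\infty h\le H$ to get the purely algebraic inequality $F(t)\le C+(G+H)F(t)^\beta$, which forces $F(t)\le M$ because $x\mapsto x-(G+H)x^\beta$ tends to $+\infty$; this avoids Gr\"onwall entirely in the sublinear case and yields an explicit bound depending only on $C$, $G$, $H$, $\beta$, whereas the paper's route keeps the proof uniform in style across both cases (everything reduces to the linear Gr\"onwall lemma) at the cost of the extra elementary inequalities. Both arguments are complete; your envelope argument is arguably the more elementary of the two for $\beta<1$, though it does not extend to $\beta=1$, which is precisely why the hypothesis $g(t)\to 0$ and the absorption trick are needed there.
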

\begin{proof}
First, we consider the case $\beta=1$. Thanks to \eqref{limit-0}, there exists $t_0>0$ such that $g(t)\leq 1/2$ for all $t>t_0$. Hence, the inequality \eqref{gronw-ineq} translates to
\begin{equation*}
    \label{beta=1}
    f(t)\leq 2C+2 \int_0^t h(s)f(s)ds, \quad \forall \ t > t_0.
\end{equation*}
Consequently, since $f$ is continuous and nonnegative, we have
\begin{equation}
    \label{beta=11}
    f(t)\leq 2C+\sup_{0\leq t\leq t_0} f(t)+2 \int_0^t h(s)f(s)ds, \quad \forall \ t \ge 0.
\end{equation}
Applying Gr\"onwall's inequality to \eqref{beta=11}, we obtain
\begin{equation*}
        \label{gronw-conc-beta1}
        f(t) \le \left(2C+\sup_{0\leq t\leq t_0} f(t)\right) \exp\left(2 \int_0^t h(s) ds\right), \quad \forall \ t \ge 0.
    \end{equation*}
In view of \eqref{bound-h}, the above estimate implies that $f$ is bounded on $[0,\infty)$.

Now, for the case $0<\beta<1$, we rewrite \eqref{gronw-ineq} using some elementary inequalities. Indeed, for the second term in the RHS of \eqref{gronw-ineq}, we use the following Young inequality
\begin{equation*}
ab^\beta\leq \frac{1}{2}b+(1-\beta)\left(2\beta\right)^{\frac{\beta}{1-\beta}} a^{\frac{1}{1-\beta}},\quad a,\, b\geq 0.
\end{equation*}
For the third term in the RHS of \eqref{gronw-ineq}, we employ the simple convex inequality 
\begin{equation*}
x^\beta\leq \beta\,x+1-\beta,\quad x\geq 0. 
\end{equation*}
Therefore, we obtain, for all $t\ge 0$,
\begin{equation}
\label{Gronw-1}
f(t)\leq 2C+2(1-\beta)\left(2\beta\right)^{\frac{\beta}{1-\beta}}\|g\|_{\infty}+2(1-\beta)\int_0^t h(s)ds+2\beta\int_0^t h(s)f(s) ds.
\end{equation}
Applying the standard Gr\"onwall inequality to \eqref{Gronw-1} yields, for all $t\ge 0$,
    \begin{equation}
        \label{gronw-conc}
        f(t) \le \left(2C+2(1-\beta)\left(2\beta\right)^{\frac{\beta}{1-\beta}}\|g\|_{\infty}+2(1-\beta)\int_0^t h(s)ds\right) \exp\left(2\beta \int_0^t h(s) ds\right).
    \end{equation}
Thanks to \eqref{bound-h}, the estimate \eqref{gronw-conc} yields the boundedness of $f$ on $[0,\infty)$.

    The proof of Lemma \ref{gronwall} is now  complete.
\end{proof}

{Second, the following continuity argument (or bootstrap argument) will also be useful for our scattering purpose. See \cite[Lemma 2.11]{DKM} for a similar statement.
\begin{lemma}\cite[Lemma 3.7, p. 437]{Strauss}\\
\label{boots}
Let $\mathbf{I}\subset\R$ be a time interval, and $\mathbf{X} : \mathbf{I}\to [0,\infty)$ be a continuous function satisfying, for every $t\in \mathbf{I}$,
\begin{equation}
		\label{boots1}
		    	\mathbf{X}(t) \leq a + b [\mathbf{X}(t)]^\theta,
		\end{equation}
	where $a,b>0$ and $\theta>1$ are constants. Assume that, for some $t_0\in \mathbf{I}$,
		\begin{equation}
		\label{boots2}
	\mathbf{X}(t_0)\leq a, \quad a\,b^{\frac{1}{\theta-1}} <(\theta-1)\,\theta^{\frac{\theta}{1-\theta}}.
				\end{equation}
		Then, for every $ t\in \mathbf{I}$, we have
		\begin{equation}
		\label{boots3}
		    	\mathbf{X}(t)< \frac{\theta\,a}{\theta-1}.
		\end{equation}
\end{lemma}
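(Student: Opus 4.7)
The plan is to prove Lemma \ref{boots} by reducing it to a sign analysis of a single real polynomial, followed by a continuity (connectedness) argument. I would introduce the auxiliary function $\Phi(x) := a + b x^\theta - x$ on $[0,\infty)$, so that the hypothesis \eqref{boots1} reads $\Phi(\mathbf{X}(t)) \ge 0$ for every $t \in \mathbf{I}$. Since $\Phi'(x) = b\theta x^{\theta-1} - 1$, the function $\Phi$ is strictly decreasing on $[0,x^*]$ and strictly increasing on $[x^*,\infty)$ with $x^* := (b\theta)^{-1/(\theta-1)}$; moreover $\Phi(0) = a > 0$ and $\Phi(x) \to \infty$ as $x \to \infty$.

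Next, a direct computation gives
\[
\Phi(x^*) \;=\; a - (\theta-1)\, b^{-1/(\theta-1)}\, \theta^{-\theta/(\theta-1)},
\]
so the strict inequality in \eqref{boots2} is exactly equivalent to $\Phi(x^*) < 0$. By the intermediate value theorem there then exist $0 < x_1 < x^* < x_2 < \infty$ with $\Phi(x_1) = \Phi(x_2) = 0$, $\Phi < 0$ on $(x_1,x_2)$, and $\Phi \ge 0$ on $[0,x_1] \cup [x_2,\infty)$. Consequently the constraint $\Phi(\mathbf{X}(t)) \ge 0$ is equivalent to
\[
\mathbf{X}(t) \in [0,x_1] \cup [x_2,\infty) \qquad \forall\, t \in \mathbf{I}.
\]

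Setting $y := \theta a/(\theta-1)$, the analogous algebraic manipulation reduces the inequality $\Phi(y) < 0$ to exactly the same quantitative condition \eqref{boots2}; hence $y \in (x_1,x_2)$ and in particular $x_1 < y < x_2$. Moreover, \eqref{boots2} forces $a < x^*$ (since $\theta - 1 < \theta$ implies $(\theta-1)\theta^{-\theta/(\theta-1)} < \theta^{-1/(\theta-1)}$), and because $\Phi$ is decreasing on $[0,x^*]$ with $\Phi(a) = b a^\theta > 0$, it follows that $a < x_1$. In particular $\mathbf{X}(t_0) \le a < x_1$.

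Finally, by continuity of $\mathbf{X}$ the image $\mathbf{X}(\mathbf{I})$ of the connected interval $\mathbf{I}$ is connected; since it contains $\mathbf{X}(t_0) \in [0,x_1]$ and must avoid the open gap $(x_1,x_2)$, it is entirely contained in $[0,x_1]$. This yields $\mathbf{X}(t) \le x_1 < y = \theta a/(\theta-1)$ for every $t \in \mathbf{I}$, which is precisely \eqref{boots3}. The only real computational content lies in the two exponent manipulations identifying both $\Phi(x^*) < 0$ and $\Phi(y) < 0$ with \eqref{boots2}; once these are verified, the rest of the argument is a textbook connectedness bootstrap.
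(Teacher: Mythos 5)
Your proof is correct: the exponent computations identifying $\Phi(x^*)<0$ and $\Phi\bigl(\tfrac{\theta a}{\theta-1}\bigr)<0$ with \eqref{boots2} check out, the inequality $a<x_1$ is properly justified, and the connectedness step closes the argument. The paper itself gives no proof (it simply cites Strauss), and your argument is precisely the standard continuity/bootstrap proof of this lemma, so nothing further is needed.
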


Let us recall the following  Gagliardo-Nirenberg type inequality \cite{Far, Genoud2012, pz} which plays a crucial role in our proofs.
\begin{lemma}\label{GNI}
Let $N\geq 1$ and $1<p<1+\frac{4}{(N-2)_+}$. Then the following Gagliardo-Nirenberg inequality holds
\begin{equation}
\int_{\R^N} |u(x)|^{p+1}\,dx
\leq\, K\,\|u\|_{L^2}^{p+1-\sigma} \ \|\nabla u\|_{L^2}^{\sigma},\label{Nirenberg}\end{equation}
where 
\begin{equation}
\label{AB}
\sigma=\frac{N}{2}(p-1).
\end{equation}
\end{lemma}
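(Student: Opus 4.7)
The plan is to combine a suitable Sobolev embedding with H\"older's interpolation inequality, the exact exponent $\sigma$ being pinned down by scaling. Setting $u_\lambda(x):=u(\lambda x)$ and requiring that both sides of \eqref{Nirenberg} transform identically in $\lambda>0$ forces $\sigma=\frac{N(p-1)}{2}$ as in \eqref{AB}; this predicted exponent drives the rest of the argument and already serves as a sanity check.

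For $N\ge 3$ I would introduce the critical Sobolev exponent $2^{\ast}=\frac{2N}{N-2}$. The assumption $p<1+\frac{4}{N-2}$ guarantees $2<p+1\le 2^{\ast}$, so H\"older's inequality between $L^2$ and $L^{2^{\ast}}$ yields
\[
\|u\|_{L^{p+1}}\;\le\;\|u\|_{L^2}^{1-\theta}\,\|u\|_{L^{2^{\ast}}}^{\theta},\qquad \theta=\frac{N(p-1)}{2(p+1)}.
\]
Invoking the classical embedding $\|u\|_{L^{2^{\ast}}}\le C\|\nabla u\|_{L^2}$ and raising to the $(p+1)$th power gives \eqref{Nirenberg}, since one checks $(p+1)\theta=\sigma$ and $(p+1)(1-\theta)=p+1-\sigma$.

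For $N=1$ I would start from the pointwise identity $u(x)^2=2\int_{-\infty}^{x}u\,u'\,ds$, which yields $\|u\|_{L^{\infty}}^{2}\le 2\|u\|_{L^2}\|u'\|_{L^2}$; splitting $\int|u|^{p+1}\le \|u\|_{L^{\infty}}^{p-1}\|u\|_{L^2}^{2}$ then produces the bound with $\sigma=(p-1)/2$. For $N=2$, where no single critical Sobolev exponent is available, one can slice in each variable and combine the 1D bound above in the spirit of Ladyzhenskaya, iterating to cover all $p>1$; alternatively, one interpolates through a fractional embedding $H^{s}(\R^2)\hookrightarrow L^{p+1}(\R^2)$ for an appropriate $s\in(0,1)$. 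A standard density argument extends the estimate from $C_{c}^{\infty}(\R^N)$ to $H^{1}(\R^N)$.

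The main obstacle is not conceptual but rather bookkeeping on the interpolation exponents, and in the $N=2$ case choosing the right auxiliary Sobolev embedding to run the interpolation; conceptually, however, everything is pinned down by the scaling argument above. As the inequality is classical, a complete proof can be found in \cite{Far, Genoud2012, pz}, and that is precisely why the paper cites rather than re-proves it.
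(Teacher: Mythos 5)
The paper does not prove this lemma at all: it is quoted as a classical fact with a pointer to \cite{Far, Genoud2012, pz}, so there is no internal argument to compare against. Your sketch is a correct rendition of the standard proof: the scaling computation does pin down $\sigma=\frac{N(p-1)}{2}$; for $N\ge 3$ the condition $p<1+\frac{4}{N-2}$ gives $2<p+1<2^{\ast}$, the H\"older interpolation exponent is indeed $\theta=\frac{N(p-1)}{2(p+1)}$, and combining with $\|u\|_{L^{2^{\ast}}}\lesssim\|\nabla u\|_{L^2}$ yields exactly $(p+1)\theta=\sigma$ and $(p+1)(1-\theta)=p+1-\sigma$; the $N=1$ argument with $\|u\|_{L^\infty}^2\le 2\|u\|_{L^2}\|u'\|_{L^2}$ also produces the right exponents (for complex $u$ just apply it to $|u|^2$ via $\rea(\bar u u')$). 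The only thin spot is $N=2$, which you leave at the level of ``Ladyzhenskaya slicing or a fractional embedding''; the cleanest way to finish it in one line is $\|u\|_{L^{p+1}(\R^2)}\lesssim\|u\|_{\dot H^{s}}$ with $s=1-\frac{2}{p+1}$ followed by the Plancherel--H\"older interpolation $\|u\|_{\dot H^{s}}\le\|u\|_{L^2}^{1-s}\,\|\nabla u\|_{L^2}^{s}$, which gives $(p+1)s=p-1=\sigma$ as required. With that detail filled in, your proof is complete and is essentially the argument contained in the references the paper cites.
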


Next, we recall the Strichartz estimates. But, before that we need the following definition.
	\begin{definition}
 \label{Admis}
	A pair $(q,r)$ is said to be  admissible if
		\[
		\frac{2}{q}+\frac{N}{r}=\frac{N}{2}, \quad \left\{
		\renewcommand*{\arraystretch}{1.2}
		\begin{array}{ll}
		r \in \left[2,\frac{2N}{N-2}\right] &\text{if } N\geq 3, \\
		r \in [2, \infty) &\text{if } N=2, \\
		r\in [2, \infty] &\text{if } N=1.
		\end{array}
		\right.
		\]	\end{definition}
Recall that the free propagator associated with \eqref{main} is given by
\begin{equation}
\label{U-a}
U_{\ba}(t)={\rm e}^{-\baa(t)}\,{\rm e}^{it\Delta},
\end{equation}
where $\baa(t)$ is given by \eqref{A}. It is then quite standard that the Cauchy problem  \eqref{main} can be written in an integral form (see \cite{Cazenave}):
\begin{equation}
\label{Integ-Eq}
u(t)=U_{\ba}(t)u_0+i \int_0^t\,U_{\ba}(t-\tau)\left(|u(\tau)|^{p-1}u(\tau)\right)\,d\tau.
\end{equation}
	
	\begin{proposition}[Strichartz estimates] \label{prop-strichartz}
		Let $N\geq 3$, $(q,r)$ be an  admissible pair, $2 < \gamma < \frac{2N}{N-2}$ and $\alpha, \beta \in (1,\infty)$ such that $$\displaystyle \frac{1}{\alpha}+\frac{1}{\beta}=N \left(\frac{1}{2}-\frac{1}{\gamma}\right).$$ 
  Let $0 <T \le \infty$. Then, there exists a constant $C>0$ independent of $T$ such that
  \begin{eqnarray*}
      && \left\| \int_0^t U_{\ba}(t-s) F(s) ds \right\|_{L^q(0,T;L^r)} \leq C \|F\|_{L^{{q}'}(0,T;L^{{r}'})},\\
      &&\left\| \int_0^t U_{\ba}(t-s) F(s) ds \right\|_{L^{\infty}(0,T;L^2)} \leq C \|F\|_{L^{{q}'}(0,T;L^{{r}'})},\\
      &&\left\| \int_0^t U_{\ba}(t-s) F(s) ds \right\|_{L^{\alpha}(0,T;L^{\gamma})} \leq C \|F\|_{L^{{\beta}'}(0,T;L^{{\gamma}'})},
  \end{eqnarray*}
  where $U_{\ba}(t)$ is given by \eqref{U-a}. 
  \end{proposition}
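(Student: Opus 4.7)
The plan is to reduce everything to the classical Strichartz estimates for the free Schr\"odinger group. The key observation is that since $\ba \ge 0$, we have $\baa(t) \ge \baa(s)$ for $t \ge s$, so the weight $e^{-(\baa(t)-\baa(s))}$ arising in the two-parameter propagator $U_{\ba}(t) U_{\ba}(s)^{-1} = e^{-(\baa(t)-\baa(s))} e^{i(t-s)\Delta}$ --- which is the operator that actually appears in the Duhamel formula for the damped equation --- lies in $(0,1]$. Consequently, this propagator satisfies the same pointwise $L^2$-boundedness and $L^1 \to L^\infty$ dispersive decay as $e^{i(t-s)\Delta}$, with constants independent of $t$, $s$, and $T$. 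With these two bounds in hand, the first two inequalities follow directly from the abstract Keel--Tao theorem, or equivalently by dominating the $L^{r}_x$-norm of the integrand under the Duhamel integral pointwise by $\|e^{i(t-s)\Delta} F(s)\|_{L^{r}_x}$ and then invoking the classical inhomogeneous Strichartz estimate for $e^{it\Delta}$ from \cite{Cazenave}.

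For the third (non-diagonal) estimate, I would use the interpolated dispersive bound $\|e^{i(t-s)\Delta} f\|_{L^\gamma} \le C|t-s|^{-N(1/2 - 1/\gamma)} \|f\|_{L^{\gamma'}}$ combined with $e^{-(\baa(t)-\baa(s))} \le 1$ to obtain the kernel estimate
\begin{equation*}
\Big\| \int_0^t e^{-(\baa(t)-\baa(s))} e^{i(t-s)\Delta} F(s)\, ds \Big\|_{L^\gamma_x} \le C \int_0^t |t-s|^{-N(1/2-1/\gamma)} \|F(s)\|_{L^{\gamma'}_x}\, ds,
\end{equation*}
and then apply the Hardy--Littlewood--Sobolev inequality in time with exponents determined by the relation $1/\alpha + 1/\beta = N(1/2-1/\gamma)$ to conclude the $L^\alpha_t L^\gamma_x$-bound.

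The main obstacle is notational rather than analytical: the symbol $U_{\ba}(t-s)$ is formally suggestive of a one-parameter group, but for genuinely time-dependent $\ba$ the relevant object is the two-parameter evolution $U_{\ba}(t) U_{\ba}(s)^{-1}$, and one must interpret the Duhamel integral accordingly. Once this is recognized, the monotonicity of $\baa$ ensures that the damping factor only improves the classical Strichartz bounds, so the proof reduces essentially verbatim to the undamped case, with $T$-independent constants since the damping weight is uniformly bounded by $1$.
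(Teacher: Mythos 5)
Your overall strategy---observe that for $0\le s\le t$ the damping weight ($e^{-\baa(t-s)}$ as literally written, or $e^{-(\baa(t)-\baa(s))}$ for the genuine two-parameter propagator) lies in $(0,1]$ and reduce to the free Schr\"odinger estimates---is exactly the spirit of the paper, whose ``proof'' is only the remark that the constant-damping proofs of \cite[Lemmas 5--7]{OhTo} carry over verbatim. Your point that the Duhamel kernel should be read as $U_{\ba}(t)U_{\ba}(s)^{-1}=e^{-(\baa(t)-\baa(s))}e^{i(t-s)\Delta}$ is a correct and useful clarification of \eqref{Integ-Eq}, and your argument for the third (Kato-type) estimate is sound: there one takes $L^\gamma_x$-norms pointwise in $(t,s)$ before doing any harmonic analysis, so bounding the weight by $1$ and concluding with the dispersive estimate and Hardy--Littlewood--Sobolev is exactly the standard proof.

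The gap is in your justification of the first two estimates. First, the Keel--Tao theorem does not apply ``directly'': it concerns kernels of the form $U(t)U(s)^{*}$, and with $U(t)=e^{-\baa(t)}e^{it\Delta}$ one gets $U(t)U(s)^{*}=e^{-(\baa(t)+\baa(s))}e^{i(t-s)\Delta}$, not the kernel at hand; writing the kernel as $U(t)V(s)^{*}$ forces $V(s)=e^{\baa(s)}e^{is\Delta}$, which is not uniformly $L^2$-bounded on $(0,\infty)$, so the hypotheses fail. Second, the ``equivalent'' route---dominate the $L^r_x$-norm of the integrand pointwise and then invoke the classical inhomogeneous Strichartz estimate---is not legitimate: after Minkowski you are left with $\int_0^t\|e^{i(t-s)\Delta}F(s)\|_{L^r_x}\,ds$, and the classical estimate bounds the norm of the integral, not the integral of the norms; for the second inequality this collapses entirely, since $\|e^{i(t-s)\Delta}F(s)\|_{L^2_x}=\|F(s)\|_{L^2_x}$, which is not controlled by $\|F\|_{L^{q'}L^{r'}}$. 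The correct (and standard) repair is to insert the bound ``weight $\le 1$'' only after passing to a dual formulation where absolute values are taken pointwise in $(t,s)$: for the diagonal retarded estimate with a non-endpoint admissible pair ($q>2$), pair with $G\in L^{q'}(0,T;L^{r'})$, apply the dispersive estimate and Hardy--Littlewood--Sobolev with exponent $2/q<1$; for the $L^\infty_tL^2_x$ estimate, dualize in $x$ at each fixed $t$, use H\"older in $s$ and the homogeneous (dual) Strichartz estimate for $e^{i\tau\Delta}$. The endpoint pair $\bigl(2,\tfrac{2N}{N-2}\bigr)$, which both of your suggested shortcuts miss, would require a separate adaptation of the Keel--Tao retarded endpoint argument, although it is not actually needed anywhere in the paper. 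With these corrections your proof becomes precisely the ``same steps as \cite{OhTo}'' that the paper invokes.
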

\begin{remark}
    \textnormal{Note that the proofs of the above estimates are given in  \cite[Lemmas 5--7]{OhTo} for the case of a constant damping. The extension to a nonnegative time-dependent damping is straightforward by following the same steps.}
\end{remark}

\begin{remark}
    \textnormal{For further readings about Strichartz estimates for dispersive equations, see e.g. \cite{BCD, Cazenave, Tao}.}
    \end{remark}

Let us now define\; 
\begin{equation}
\label{Paramet}
    \theta=\frac{2(p-1)(p+1)}{4-(N-2)(p-1)}, \quad q_0=\frac{4(p+1)}{N(p-1)},\quad r_0=p+1.   
\end{equation}
Clearly $(q_0, r_0)$ is an admissible pair. We state the following proposition  which gives a sufficient condition for the global existence and the scattering for \eqref{main}. 
\begin{proposition}
\label{Oh-To}
Let $N\geq 3$,  $1+\frac{4}{N}\leq p<1+\frac{4}{N-2}$ and $u_0\in H^1(\R^N)$. Assume that $\underline{\ba}>0$. Then, there exist $\varepsilon>0$ (independent of $\ba$ and $u_0$) such that $T^*_{\ba}(u_0)=\infty$ provided that $\|U_{\ba}(\cdot)u_0\|_{L^\theta(0,\infty;~L^{r_0})}~\leq~ \varepsilon$, where $\theta$ is given by \eqref{Paramet}. Moreover, we have 
\begin{equation}
    \label{p-r0-est}
    \|u\|_{L^{\theta}([0,\infty); L^{r_0})} \le 2 \varepsilon.
\end{equation}
In particular, the solution $u$ scatters in $H^1(\R^N)$.
\end{proposition}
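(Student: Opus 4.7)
The plan is to run a standard small-data Strichartz argument on the Duhamel formula \eqref{Integ-Eq}, close it by the bootstrap Lemma \ref{boots}, and then upgrade the $L^{\theta}L^{r_0}$ bound to global existence and $H^1$-scattering.

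First, I would apply the third estimate of Proposition \ref{prop-strichartz} to the nonlinear Duhamel term in \eqref{Integ-Eq}, choosing $\gamma = r_0 = p+1$ and $\alpha = \theta$. The admissibility relation $\frac{1}{\alpha}+\frac{1}{\beta} = N\left(\frac{1}{2}-\frac{1}{\gamma}\right)$, combined with the explicit value of $\theta$ in \eqref{Paramet}, is readily checked to force $\beta' = \theta/p$. Hence H\"older's inequality in space (using $\gamma' = (p+1)/p$) and in time produces the clean identity
\[
\||u|^{p-1}u\|_{L^{\beta'}((0,T);L^{r_0'})} = \|u\|_{L^{\theta}((0,T);L^{r_0})}^{p}.
\]
Together with the hypothesis $\|U_{\ba}(\cdot)u_0\|_{L^{\theta}((0,\infty);L^{r_0})}\le \varepsilon$, this yields, on the maximal existence interval $[0,T^*_{\ba}(u_0))$,
\[
X(T) \le \varepsilon + C\,X(T)^{p}, \qquad X(T) := \|u\|_{L^{\theta}((0,T);L^{r_0})}.
\]

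Since $X(0)=0$ and $X$ is continuous, Lemma \ref{boots} with $a=\varepsilon$, $b=C$ and exponent $p>1$ provides, for $\varepsilon$ small enough (depending only on $C$ and $p$), the uniform bound $X(T)\le 2\varepsilon$ throughout the existence interval, which is precisely \eqref{p-r0-est}. To turn this scattering-norm estimate into a uniform $H^1$ bound, I would re-run the same Strichartz/H\"older scheme on $\nabla u$: using the pointwise bound $|\nabla(|u|^{p-1}u)|\lesssim |u|^{p-1}|\nabla u|$, splitting in space via H\"older as $\|u\|_{L^{r_0}}^{p-1}\|\nabla u\|_{L^{r_0}}$, and then in time between $L^{\theta/(p-1)}_t$ and $L^{q_0}_t$, the prefactor $\|u\|_{L^{\theta}L^{r_0}}^{p-1}\le (2\varepsilon)^{p-1}$ is absorbed on the left. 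This yields uniform control of $\|\nabla u\|_{L^{q_0}((0,T);L^{r_0})}$ together with $\|\nabla u\|_{L^{\infty}((0,T);L^2)}$, and the blow-up criterion \eqref{Blow-Crit} (transferred to \eqref{main} via the substitution $v=e^{\baa(t)}u$ discussed after Theorem \ref{LWP-NLS}) then forces $T^*_{\ba}(u_0)=\infty$.

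Finally, for $H^1$-scattering I would change variables to $v(t)=e^{\baa(t)}u(t)$, which satisfies the standard NLS
\[
{\rm i}\partial_t v + \Delta v = -e^{(1-p)\baa(t)}|v|^{p-1}v,
\]
and show that $e^{-{\rm i}t\Delta}v(t)$ is Cauchy in $H^1$ as $t\to\infty$ (which is equivalent to \eqref{Scatt-def} since $e^{-{\rm i}t\Delta}$ is an $H^1$-isometry). The Duhamel formula reduces this to bounding $\left\|\int_s^t e^{-{\rm i}\tau\Delta}e^{(1-p)\baa(\tau)}|v(\tau)|^{p-1}v(\tau)\,d\tau\right\|_{H^1}$ and showing that this quantity vanishes as $s\to\infty$; by a dual Strichartz estimate this is in turn controlled by a suitable derivative Strichartz norm of $e^{(1-p)\baa(\tau)}|v|^{p-1}v$ on $(s,\infty)$. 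I expect this last step to be the main obstacle: the $L^{\theta}L^{r_0}$ bound on $u$ does not directly transfer to $v=e^{\baa}u$, so one has to exploit the exponential weight $e^{(1-p)\baa(\tau)}$ available in the $v$-formulation in an essential way -- and this is precisely where the assumption $\underline{\ba}>0$ (which gives $e^{(1-p)\baa(\tau)}\le e^{-(p-1)\underline{\ba}\tau}$) enters.
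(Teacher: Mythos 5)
Your handling of \eqref{p-r0-est} and of global existence is essentially the paper's route: the paper also obtains \eqref{p-r0-est} by combining the Strichartz estimates of Proposition \ref{prop-strichartz} with the bootstrap Lemma \ref{boots} (following \cite[Proposition 3]{OhTo}), and your exponent bookkeeping ($\gamma=r_0$, $\alpha=\theta$, $\beta'=\theta/p$) checks out; for global existence the paper simply cites \cite[Proposition 3.1]{HM-Damped NLS}, and your derivative-Strichartz absorption plus the blow-up criterion is the standard argument behind that citation. (A cosmetic point: Lemma \ref{boots} with exponent $p$ gives the bound $\tfrac{p}{p-1}\varepsilon$, which exceeds $2\varepsilon$ when $p<2$; the constant in \eqref{p-r0-est} is inessential.)

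The genuine gap is the scattering step, which you leave as an acknowledged ``main obstacle'' and for which the mechanism you point to is not the one that closes the argument. The paper concludes by invoking Lemma \ref{L2.4}: the bound \eqref{p-r0-est} is precisely its hypothesis \eqref{theta-r-0}, and Lemma \ref{L2.3} supplies the needed global weighted bound $\left\|e^{\underline{\ba}t}u\right\|_{L^{q_0}([0,\infty);W^{1,r_0})}<\infty$. The key there is \emph{not} the decay of the weight $e^{(1-p)\baa(\tau)}\le e^{-(p-1)\underline{\ba}\tau}$ --- indeed the paper remarks that Lemmas \ref{L2.3}--\ref{L2.4} hold even when $\underline{\ba}=0$ --- but the algebraic splitting of the nonlinearity: keep $p-1$ factors of the unweighted $u$, whose tail norm $\|u\|_{L^{\theta}((\tau_0,\infty);L^{r_0})}$ is small thanks to \eqref{p-r0-est}, and one factor of the weighted solution, which is then absorbed into the left-hand side of a Strichartz estimate on $(\tau_0,\infty)$. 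In your $v$-formulation the same trick reads $e^{(1-p)\baa}|v|^{p-1}v=|u|^{p-1}v$, so the Duhamel tail is bounded by $\|u\|^{p-1}_{L^{\theta}((s,\infty);L^{r_0})}\,\|v\|_{L^{q_0}((s,\infty);W^{1,r_0})}$, and the finiteness of the second factor must itself be produced by this absorption argument; merely inserting the decay $e^{-(p-1)\underline{\ba}\tau}$ still leaves you needing an a priori global Strichartz bound on $v$ that you have not established. Once you replace your last step by this absorption (or simply quote Lemma \ref{L2.4}), the proof closes and coincides with the paper's.
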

\begin{proof}
First, note that the global existence part is proven in \cite[Proposition 3.1]{HM-Damped NLS}, see also \cite[Proposition 3]{OhTo} for the case of a constant damping. Second, for the proof of \eqref{p-r0-est}, we follow the same steps as in the proof of Proposition 3 in \cite{OhTo}. Indeed, it suffices to use the Strichartz estimates in Proposition \ref{prop-strichartz} and the boostrap argument in Lemma \ref{boots} to conclude the proof of \eqref{p-r0-est}. Third, the scattering conclusion follows in a standard way in view of Lemma \ref{L2.4} below for which the assumption \eqref{theta-r-0} is fulfilled thanks to \eqref{p-r0-est}.
\end{proof}

\begin{lemma}
\label{L2.3}
Let $N\geq 3$, $1<p<1+\frac{4}{N-2}$ and $u_0\in H^1(\R^N)$. Suppose that the global solution $u$ of \eqref{main}  satisfies
\begin{equation}
\label{theta-r-0}
\|u\|_{L^{\theta}([0,\infty); L^{r_0})} <\infty.
\end{equation}
Then, we have
\begin{equation}
\label{q0-r0}
\left\|e^{\underline{\ba} \, t}u\right\|_{L^{q_0}([0,\infty); W^{1, r_0})} <\infty,
\end{equation}
where $\underline{\ba}$ is given by \eqref{ais}.
\end{lemma}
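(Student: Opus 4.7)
As remarked in (ii) after Theorem~\ref{LWP-NLS}, the substitution $v(t,x)={\rm e}^{\baa(t)}u(t,x)$ reduces \eqref{main} to the \emph{undamped} focusing NLS ${\rm i}\partial_t v+\Delta v=\mu\,{\rm e}^{(1-p)\baa(t)}|v|^{p-1}v$ with $v(0)=u_0$, whose Duhamel representation only involves the free Schr\"odinger group $e^{{\rm i}t\Delta}$. Since $\baa(t)\ge\underline{\ba}\,t$ by definition \eqref{ais}, one has pointwise ${\rm e}^{\underline{\ba}\,t}|u(t,x)|\le|v(t,x)|$ and ${\rm e}^{\underline{\ba}\,t}|\nabla u(t,x)|\le|\nabla v(t,x)|$, so proving \eqref{q0-r0} amounts to showing that $v\in L^{q_0}([0,\infty);W^{1,r_0})$.

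\textbf{Strichartz bound on a subinterval.} On any $I=[a,b]\subset[0,\infty)$, the plan is to apply the Strichartz estimates of Proposition~\ref{prop-strichartz} (with $(q_0,r_0)$ admissible and the dual pair $(q_0',r_0')$) to both $v$ and $\nabla v$, reading $v$ as the solution to its integral equation starting at $a$. The crucial algebraic cancellation
\[{\rm e}^{(1-p)\baa(t)}\,\|v(t)\|_{L^{r_0}}^{p-1}={\rm e}^{(1-p)\baa(t)}{\rm e}^{(p-1)\baa(t)}\,\|u(t)\|_{L^{r_0}}^{p-1}=\|u(t)\|_{L^{r_0}}^{p-1},\]
combined with the pointwise estimate $\|\,|v|^{p-1}\nabla v\|_{L^{r_0'}}\le\|v\|_{L^{r_0}}^{p-1}\|\nabla v\|_{L^{r_0}}$ (valid since $r_0=p+1$) and a H\"older inequality in time based on the identity $\tfrac{1}{q_0'}=\tfrac{p-1}{\theta}+\tfrac{1}{q_0}$, which one checks directly from \eqref{Paramet}, yields
\[\|v\|_{L^{q_0}(I;W^{1,r_0})}\le C\,\|v(a)\|_{H^1}+C\,\|u\|_{L^\theta(I;L^{r_0})}^{p-1}\,\|v\|_{L^{q_0}(I;W^{1,r_0})}.\]

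\textbf{Finite partition and iteration.} The hypothesis \eqref{theta-r-0} says that $t\mapsto\|u(t)\|_{L^{r_0}}^\theta$ is integrable on $[0,\infty)$, so there exists a finite partition $0=T_0<T_1<\dots<T_J=\infty$ with $C\|u\|_{L^\theta([T_j,T_{j+1}];L^{r_0})}^{p-1}\le 1/2$ on each piece. Absorbing the nonlinear term in the estimate above gives $\|v\|_{L^{q_0}([T_j,T_{j+1}];W^{1,r_0})}\le 2C\|v(T_j)\|_{H^1}$, while the parallel application of the endpoint Strichartz estimate (the $L^\infty_tL^2_x$ line in Proposition~\ref{prop-strichartz}) to both $v$ and $\nabla v$ yields $\|v(T_{j+1})\|_{H^1}\le 2C\|v(T_j)\|_{H^1}$. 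Iterating at most $J$ times produces $\|v(T_j)\|_{H^1}\le(2C)^J\|u_0\|_{H^1}$, and summing the $J$ resulting $L^{q_0}$-bounds concludes $\|v\|_{L^{q_0}([0,\infty);W^{1,r_0})}<\infty$.

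\textbf{Main obstacle.} The heart of the argument is the algebraic cancellation: it makes the right-hand side of the H\"older estimate depend on $\|u\|_{L^\theta L^{r_0}}$ (finite by hypothesis) rather than on $\|v\|_{L^\theta L^{r_0}}$, which is a priori exponentially large. The only additional subtlety is the geometric growth of $\|v(T_j)\|_{H^1}$ along the iteration, but this is harmless since the partition has only \emph{finitely many} pieces; no uniform-in-time $H^1$-bound for $v$ is required.
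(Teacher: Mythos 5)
Your argument is correct, and it reaches the conclusion by a genuinely different (and slightly stronger) route than the paper. The paper works directly with the weighted solution $e^{\underline{\ba}\,t}u$ through the damped Duhamel formula \eqref{Integ-Eq}, using the two elementary inequalities $\underline{\ba}\,t-\baa(t)\le 0$ and $\underline{\ba}\,t-\baa(t-\sigma)\le \underline{\ba}\,\sigma$ from \eqref{observ}; it then performs a \emph{single} absorption on a tail interval $(\tau_0,\infty)$, where $\tau_0$ is chosen so large that $C\|u\|_{L^\theta((\tau_0,\infty);L^{r_0})}^{p-1}\le 1/2$, and disposes of the compact piece $[0,\tau_0]$ simply by invoking $u\in L^{q_0}_{loc}([0,\infty);W^{1,r_0})$ from the local theory. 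You instead pass to $v=e^{\baa(t)}u$ solving \eqref{main-bis}, exploit the exact cancellation $e^{(1-p)\baa(t)}\|v(t)\|_{L^{r_0}}^{p-1}=\|u(t)\|_{L^{r_0}}^{p-1}$, and run a finite partition of $[0,\infty)$ with an iteration that propagates the $H^1$ norm of $v$ across the partition points with geometric growth; since $\baa(t)\ge\underline{\ba}\,t$ by \eqref{ais}, your bound on $\|v\|_{L^{q_0}W^{1,r_0}}$ is in fact stronger than \eqref{q0-r0} (it carries the full weight $e^{\baa(t)}$ rather than $e^{\underline{\ba}\,t}$), and your H\"older bookkeeping $\tfrac{1}{q_0'}=\tfrac{p-1}{\theta}+\tfrac{1}{q_0}$ checks out against \eqref{Paramet}. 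What the paper's version buys is economy: one absorption plus local integrability, no iteration and no tracking of intermediate $H^1$ norms; what yours buys is the stronger weighted bound and a self-contained control of $\|v(T_j)\|_{H^1}$ without appealing separately to the local-in-time membership of $u$ in $L^{q_0}W^{1,r_0}$ beyond each bounded piece. Two small points you should make explicit: the absorption step presupposes that the norm being absorbed is finite, so on the bounded pieces you should quote $u\in L^{q_0}_{loc}([0,\infty);W^{1,r_0})$ (with $e^{\baa}$ bounded on compact time intervals), and on the final unbounded piece $[T_{J-1},\infty)$ you should argue on $[T_{J-1},s]$ with a bound uniform in $s$ and let $s\to\infty$, exactly as the paper does with its arbitrary $s>\tau_0$.
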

\begin{proof}
The proof follows the same lines as the one in \cite[Lemma 2.3]{In} with the necessary modifications for the time-dependent damping case that we will explain in the subsequent. \\
Using \eqref{Integ-Eq}, we have for all $0<\tau<t$,
\begin{equation*}
    e^{\underline{\ba} \, t} u(t)=e^{\underline{\ba} \, t -\baa(t)}{\rm e}^{it\Delta} u(\tau)+i \int_{\tau}^t\,e^{\underline{\ba} \, t -\baa(t-\sigma)}{\rm e}^{i(t-\sigma)\Delta}\left(|u(\sigma)|^{p-1}u(\sigma)\right)\,d\sigma.
\end{equation*}
From the definition of $\underline{\ba}$, given by \eqref{ais}, we easily infer that 
\begin{equation}
    \label{observ}
    \underline{\ba} \, t - \baa(t) \le 0, \quad \underline{\ba} \, t -\baa(t-\sigma) \le \underline{\ba} \, \sigma.
\end{equation}
Employing the Strichartz estimates together with \eqref{observ}, we deduce that for $0<\tau<s$,
\begin{equation}
\label{strich}
\begin{array}{rcl}
   \left\|e^{\underline{\ba} \, t}u\right\|_{L^{q_0}((\tau,s); W^{1, r_0})} &\le& C \|u(\tau)\|_{H^1}+\left\|e^{\underline{\ba} \, \sigma}|u(\sigma)|^{p-1}u(\sigma)\right\|_{L^{q'_0}((\tau,s); W^{1, r'_0})} \\
   &\le& C \|u(\tau)\|_{H^1}+C \|u\|^{p-1}_{L^{\theta}((\tau,s); L^{r_0})}\left\|e^{\underline{\ba} \, t}u\right\|_{L^{q_0}((\tau,s); W^{1, r_0})},
   \end{array}
\end{equation}
where $C$ is a positive constant.\\
Thanks to the hypothesis \eqref{theta-r-0}, one can choose $\tau=\tau_0>0$ large enough such that
\begin{displaymath}
    \|u\|^{p-1}_{L^{\theta}((\tau_0,\infty); L^{r_0})} < \frac{1}{2C}.
\end{displaymath}
Now, for $s>\tau_0$, we have from \eqref{strich} that
\begin{displaymath}
    \left\|e^{\underline{\ba} \, t}u\right\|_{L^{q_0}((\tau_0,s); W^{1, r_0})} \le 2C \|u(\tau_0)\|_{H^1}.
\end{displaymath}
Since $s>\tau_0$ is arbitrary, we obtain that
\begin{displaymath}
    \left\|e^{\underline{\ba} \, t}u\right\|_{L^{q_0}((\tau_0,\infty); W^{1, r_0})} \le 2C \|u(\tau_0)\|_{H^1}.
\end{displaymath}
In view of Theorem \ref{LWP-NLS}, Remark \ref{Rem2.2} $(ii)$ and the fact that $(q_0, r_0)$ is an admissible pair, we have $u \in L_{loc}^{q_0}([0,\infty); W^{1, r_0})$. This completes the proof of Lemma \ref{L2.3}.
\end{proof}
\begin{lemma}
\label{L2.4}
Let $N\geq 3$, $1<p<1+\frac{4}{N-2}$ and $u_0\in H^1(\R^N)$. Suppose that the global solution $u$ of \eqref{main}  satisfies \eqref{theta-r-0}. Then $u$   scatters in $H^1(\R^N)$.
\end{lemma}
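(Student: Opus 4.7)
My plan is to prove scattering by a Cauchy-sequence argument in $H^1$ for the candidate scattering state $e^{\baa(t)}e^{-it\Delta}u(t)$. A clean way to set things up is to change variables and work with $v(t):=e^{\baa(t)}u(t)$, which satisfies the weighted NLS
\[
i\partial_t v+\Delta v=-e^{-(p-1)\baa(t)}|v|^{p-1}v,\qquad v(0)=u_0,
\]
so that Definition \ref{Def-Scat} for $u$ is equivalent to the assertion that $e^{-it\Delta}v(t)$ converges in $H^1(\R^N)$ as $t\to\infty$. Writing $v$ via Duhamel for the free group $e^{it\Delta}$, for $0<s<t$ I have
\[
e^{-it\Delta}v(t)-e^{-is\Delta}v(s)=i\int_s^t e^{-i\tau\Delta}\,e^{-(p-1)\baa(\tau)}|v|^{p-1}v(\tau)\,d\tau,
\]
and it suffices to show that the $H^1$-norm of this integral tends to $0$ as $s,t\to\infty$.

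I will estimate this integral by the standard (undamped) dual Strichartz inequality on the admissible pair $(q_0,r_0)$ of \eqref{Paramet}:
\[
\|e^{-it\Delta}v(t)-e^{-is\Delta}v(s)\|_{H^1}\;\le\;C\bigl\|e^{-(p-1)\baa(\tau)}|v|^{p-1}v\bigr\|_{L^{q_0'}((s,t);W^{1,r_0'})}.
\]
Since $e^{-(p-1)\baa(\tau)}|v|^{p-1}v=|u|^{p-1}v$, a H\"older inequality in space and time---relying on the algebraic identity $\tfrac{2}{q_0}+\tfrac{p-1}{\theta}=1$ which is forced by the definitions in \eqref{Paramet}---yields
\[
\bigl\||u|^{p-1}v\bigr\|_{L^{q_0'}((s,t);W^{1,r_0'})}\;\le\;C\,\|u\|_{L^{\theta}((s,t);L^{r_0})}^{p-1}\,\|v\|_{L^{q_0}((s,t);W^{1,r_0})}.
\]

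By hypothesis \eqref{theta-r-0}, the first factor is a tail of a globally $L^{\theta}(L^{r_0})$-integrable function and tends to $0$ as $s\to\infty$. For the second factor I will establish the global bound $\|v\|_{L^{q_0}([0,\infty);W^{1,r_0})}<\infty$ by a Strichartz/bootstrap argument on the Duhamel formula for $v$, in the spirit of Lemma \ref{L2.3}: I pick $\tau_0$ so large that $C\|u\|_{L^{\theta}((\tau_0,\infty);L^{r_0})}^{p-1}\le 1/2$, and then on any interval $[\tau_0,s)$ the (undamped) Strichartz inequality applied to $v$, combined with the same H\"older bound, yields
\[
\|v\|_{L^{q_0}((\tau_0,s);W^{1,r_0})}\le C'\|v(\tau_0)\|_{H^1}+\tfrac12\|v\|_{L^{q_0}((\tau_0,s);W^{1,r_0})},
\]
which absorbs and gives a uniform bound; the short interval $[0,\tau_0]$ is controlled by the local theory.

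Combining these ingredients, the dual-Strichartz bound on the difference tends to $0$ as $s,t\to\infty$, so the family $\{e^{-it\Delta}v(t)\}$ is Cauchy in $H^1(\R^N)$ and its limit $u^+:=\lim_{t\to\infty}e^{\baa(t)}e^{-it\Delta}u(t)$ is the desired scattering state. The main technical hurdle is bookkeeping the H\"older exponents so that the tail of $\|u\|_{L^{\theta}(L^{r_0})}$ and the global $\|v\|_{L^{q_0}(W^{1,r_0})}$ exactly fit into the dual-Strichartz norm of $|u|^{p-1}v$; once this algebraic match is in place the bootstrap for $\|v\|_{L^{q_0}(W^{1,r_0})}$ runs in complete analogy with Lemma \ref{L2.3}, but applied to $v$ with the standard Schr\"odinger Strichartz so that the full weight $e^{\baa(\tau)}$ (not only $e^{\underline{\ba}\tau}$) is naturally absorbed into the unknown.
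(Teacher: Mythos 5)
Your proof is correct, and its skeleton is the same as the paper's: Duhamel formula, dual Strichartz on the admissible pair $(q_0,r_0)$, the H\"older splitting based on $\tfrac{2}{q_0}+\tfrac{p-1}{\theta}=1$ so that the tail of $\|u\|_{L^{\theta}L^{r_0}}$ (small by \eqref{theta-r-0}) multiplies a globally bounded $L^{q_0}W^{1,r_0}$ norm obtained by an absorption/bootstrap exactly as in Lemma \ref{L2.3}. The genuine difference is the choice of weight. The paper keeps the weight $e^{\underline{\ba}\,t}$, invokes the bound \eqref{q0-r0} on $e^{\underline{\ba}\,t}u$ from Lemma \ref{L2.3} (whose proof uses the inequalities \eqref{observ}), and concludes that $e^{\underline{\ba}\,t}{\rm e}^{-it\Delta}u(t)$ converges in $H^1$. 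You instead pass to $v=e^{\baa(t)}u$, which turns \eqref{main} into \eqref{main-bis} with the harmless bounded factor ${\rm e}^{(1-p)\baa(t)}\le 1$, rerun the Lemma \ref{L2.3}-type bootstrap for $v$ itself with the undamped Strichartz estimates, and obtain convergence of ${\rm e}^{-it\Delta}v(t)=e^{\baa(t)}{\rm e}^{-it\Delta}u(t)$. This buys two things: your conclusion is literally the statement of Definition \ref{Def-Scat} (the paper's proof, as written, yields the $e^{\underline{\ba}\,t}$-weighted profile, which coincides with the definition only up to the discrepancy $\baa(t)-\underline{\ba}\,t\ge 0$), and your argument never uses $\underline{\ba}$ at all, so it transparently covers any nonnegative damping, consistent with the paper's remark that the lemma holds even when $\underline{\ba}=0$. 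The paper's route, in exchange, reuses Lemma \ref{L2.3} verbatim instead of redoing the bootstrap. Your only implicit obligations — the product/chain-rule bound $\|\,|u|^{p-1}v\,\|_{W^{1,r_0'}}\lesssim \|u\|_{L^{r_0}}^{p-1}\|v\|_{W^{1,r_0}}$ and the finiteness of $\|v\|_{L^{q_0}((0,\tau_0);W^{1,r_0})}$ from the local theory — are exactly the ones the paper also uses, so there is no gap.
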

\begin{proof}
Let $0<t<s<\infty$. Arguing as in the proof of Lemma \ref{L2.3}, we infer that
\begin{equation*}
\left\|e^{\underline{\ba} \, t }{\rm e}^{-it\Delta} u(t)-e^{\underline{\ba} \, s }{\rm e}^{-is\Delta} u(s)\right\|_{H^1}\lesssim  \|u\|^{p-1}_{L^{\theta}((t,s); L^{r_0})}\left\|e^{\underline{\ba} \, t}u\right\|_{L^{q_0}((t,s); W^{1, r_0})}.
\end{equation*}
Thanks to the hypothesis \eqref{theta-r-0} and the conclusion \eqref{q0-r0} in Lemma \ref{L2.3}, we obtain the existence of $u_+\in H^1(\R^N)$ such that $e^{\underline{\ba} \, t }{\rm e}^{-it\Delta} u(t)$ converges, in $H^1(\R^N)$, to $u_+$ as $t\to\infty$.
\end{proof}

\begin{remark}
    {\rm Note that Lemmas \ref{L2.3} and \ref{L2.4} are valid even for $\underline \ba =0$.}
\end{remark}

\begin{lemma}
\label{L2.5}
Let $N\geq 3$, $1<p<1+\frac{4}{N-2}$ and $u_0\in H^1(\R^N)$. Assuming that $\underline \ba >0$. Moreover, suppose that the global solution $u$ of \eqref{main}  satisfies
\begin{equation}
\label{Bound-Scatt}
\sup_{t\geq 0}e^{\underline{\ba} \, t }\,\left\|\nabla u(t)\right\|_{L^2} <\infty.
\end{equation}
Then $u$   scatters in $H^1(\R^N)$.
\end{lemma}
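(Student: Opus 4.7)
The plan is to reduce the statement to Lemma \ref{L2.4} by verifying its key hypothesis \eqref{theta-r-0}, namely that $u \in L^{\theta}([0,\infty);L^{r_0})$ with $r_0=p+1$. Since \eqref{Bound-Scatt} already provides the exponential decay of $\|\nabla u(t)\|_{L^2}$, and the mass identity \eqref{M-Id} together with $\baa(t)\ge \underline{\ba}\,t$ (immediate from \eqref{ais}) yields
\begin{equation*}
\|u(t)\|_{L^2} \le \|u_0\|_{L^2}\, e^{-\underline{\ba}\,t},
\end{equation*}
both $\|u(t)\|_{L^2}$ and $\|\nabla u(t)\|_{L^2}$ decay like $e^{-\underline{\ba}\,t}$. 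The natural way to interpolate these into an $L^{p+1}$ estimate is the Gagliardo--Nirenberg inequality from Lemma \ref{GNI}.

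First I would observe that, by the definition \eqref{ais} of $\underline{\ba}$, one has $\baa(t) \ge \underline{\ba}\,t$ for all $t\ge 0$, so \eqref{M-Id} gives $\|u(t)\|_{L^2}\le \|u_0\|_{L^2}e^{-\underline{\ba}\,t}$. Combining this with \eqref{Bound-Scatt}, there exists $C_0>0$ such that
\begin{equation*}
\|u(t)\|_{L^2}+\|\nabla u(t)\|_{L^2} \le C_0\, e^{-\underline{\ba}\,t}, \qquad t\ge 0.
\end{equation*}
Next, applying \eqref{Nirenberg} with $\sigma = \frac{N(p-1)}{2}$ and noting that $p+1-\sigma>0$ exactly when $p<1+\frac{4}{(N-2)_+}$ (which is our regime), I get
\begin{equation*}
\|u(t)\|_{L^{p+1}}^{p+1}
\le K\,\|u(t)\|_{L^2}^{p+1-\sigma}\,\|\nabla u(t)\|_{L^2}^{\sigma}
\le K\, C_0^{p+1}\, e^{-(p+1)\underline{\ba}\,t},
\end{equation*}
so that $\|u(t)\|_{L^{p+1}}\le C_1 \,e^{-\underline{\ba}\,t}$ for some constant $C_1>0$.

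Then I would integrate the $\theta$-th power in time: since $\theta>0$ and $\underline{\ba}>0$,
\begin{equation*}
\|u\|_{L^{\theta}([0,\infty);L^{p+1})}^{\theta}
= \int_0^{\infty} \|u(t)\|_{L^{p+1}}^{\theta}\,dt
\le C_1^{\theta}\int_0^\infty e^{-\theta\,\underline{\ba}\,t}\,dt
= \frac{C_1^{\theta}}{\theta\,\underline{\ba}} <\infty.
\end{equation*}
This precisely verifies the hypothesis \eqref{theta-r-0} of Lemma \ref{L2.4}, so scattering of $u$ in $H^1(\R^N)$ follows at once from that lemma. There is no essential obstacle in this argument; the only point requiring care is the sign condition $p+1-\sigma>0$, which holds in the full range $1<p<1+\frac{4}{(N-2)_+}$, so the Gagliardo--Nirenberg interpolation genuinely transfers the $e^{-\underline{\ba}\,t}$ decay from the endpoints $L^2$ and $\dot H^1$ to the intermediate Lebesgue norm $L^{p+1}$.
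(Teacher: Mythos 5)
Your proposal is correct and follows essentially the same route as the paper: both arguments combine the mass identity \eqref{M-Id} with \eqref{Bound-Scatt} to obtain exponential decay of $u$ in $H^1$, deduce $\|u(t)\|_{L^{p+1}}\lesssim e^{-\underline{\ba}\,t}$, integrate in time using $\underline{\ba}>0$ to verify \eqref{theta-r-0}, and conclude by Lemma \ref{L2.4}. The only (immaterial) difference is that you pass from $H^1$ to $L^{p+1}$ via the Gagliardo--Nirenberg inequality, whereas the paper simply invokes the Sobolev embedding $H^1\hookrightarrow L^{r_0}$.
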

\begin{proof}
Employing the mass identity \eqref{M-Id}, the hypothesis \eqref{Bound-Scatt} and the definition of $\underline{\ba}$, \eqref{ais}, we obtain
\begin{equation}
\label{Scatt-1}
e^{\underline{\ba}\,t} \|u(t)\|_{H^1}\lesssim e^{\underline{\ba}\,t} \|u(t)\|_{L^2}+e^{\underline{\ba}\,t} \|\nabla u(t)\|_{L^2}\leq C<\infty.
\end{equation}
Therefore, by the Sobolev embedding $H^1\hookrightarrow L^{r_0}$ and \eqref{Scatt-1}, we deduce that
\begin{equation*}
\|u\|_{L^{\theta}((0,\infty); L^{r_0})}=\|e^{-\underline{\ba}\,t} \left(e^{\underline{\ba}\,t}u\right)\|_{L^{\theta}((0,\infty); L^{r_0})}\lesssim \|e^{-\underline{\ba}\,t}\|_{L^{\theta}((0,\infty))}<\infty,
\end{equation*}
where we used the fact that $\underline{\ba}>0$. The conclusion follows from Lemma \ref{L2.4}.
\end{proof}
\begin{lemma}
\label{L2.9}
Let $N\geq 3$, $1<p<1+\frac{4}{N-2}$, $u_0\in H^1(\R^N)$ and $u$  be the global solution  of \eqref{main}. Assuming \eqref{int-a}, we have
\begin{equation}
\label{liminf-}
\displaystyle\liminf_{t\to\infty}\,{\mathbf I}(u(t))\leq 0,
\end{equation}
where ${\mathbf I}(u(t))$ is defined by \eqref{Iu}.
\end{lemma}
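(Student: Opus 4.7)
The plan is to argue by contradiction. Suppose $\liminf_{t\to\infty} \mathbf{I}(u(t)) > 0$. Then there exist $\delta>0$ and $t_0>0$ such that $\mathbf{I}(u(t)) \ge \delta$ for all $t \ge t_0$. The strategy is to derive from this a conflict between two opposite tendencies of $\mathbf{E}(u(t))$: the energy identity \eqref{E-Id} forces $\mathbf{E}$ to diverge to $-\infty$, while a simple algebraic comparison between $\mathbf{I}$ and $\mathbf{E}$ forces $\mathbf{E}$ to stay bounded below by a positive constant.

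First I would exploit the energy identity \eqref{E-Id}. Integrating from $t_0$ to $t$ and using the assumed lower bound on $\mathbf{I}$ gives
\begin{equation*}
\mathbf{E}(u(t)) = \mathbf{E}(u(t_0)) - 2\int_{t_0}^{t} \ba(s)\,\mathbf{I}(u(s))\,ds \le \mathbf{E}(u(t_0)) - 2\delta\int_{t_0}^{t} \ba(s)\,ds.
\end{equation*}
Since \eqref{int-a} holds, the right-hand side tends to $-\infty$ as $t\to\infty$, so $\mathbf{E}(u(t)) \to -\infty$.

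Next I would compare $\mathbf{I}$ and $\mathbf{E}$. From the definitions \eqref{Ener} and \eqref{Iu} one gets directly
\begin{equation*}
\mathbf{I}(u(t)) - \mathbf{E}(u(t)) = \frac{1-p}{p+1}\int_{\R^N} |u(t,x)|^{p+1}\,dx \le 0,
\end{equation*}
since $p>1$. Hence $\mathbf{I}(u(t)) \le \mathbf{E}(u(t))$, and combined with $\mathbf{I}(u(t))\ge \delta$ for $t\ge t_0$ this yields $\mathbf{E}(u(t)) \ge \delta$ for all large $t$, contradicting the previous step. This contradiction proves \eqref{liminf-}.

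I do not expect a substantial obstacle here: the argument is essentially a one-line consequence of \eqref{E-Id}, \eqref{int-a}, and the elementary inequality $\mathbf{I}\le \mathbf{E}$ valid for $p>1$. The only minor technical point is that \eqref{E-Id} is stated in Proposition \ref{Rela} for sufficiently smooth solutions, so rigorously one should justify it for the $H^1$-solution via a standard regularization/approximation argument; this is routine and does not affect the structure of the proof.
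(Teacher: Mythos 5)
Your proof is correct and follows essentially the same route as the paper: a contradiction argument using the energy identity \eqref{E-Id} together with \eqref{int-a} to force $\mathbf{E}(u(t))\to-\infty$, combined with the elementary inequality $\mathbf{I}(u(t))\le \mathbf{E}(u(t))$ (valid since $p>1$) to get the contradiction. No issues.
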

\begin{proof}
We argue by contradiction. Suppose that
\[
\displaystyle\liminf_{t\to\infty}\,{\mathbf I}(u(t))> 0.
\]
Then, there exist $\delta,~T_0>0$ such that 
\begin{equation}
\label{I-delta}
{\mathbf I}(u(t))\geq \delta,\quad \forall \ t\geq T_0.
\end{equation}
The above inequality together with \eqref{E-Id} imply that
\[
\frac{d}{dt}{\mathbf E}(u(t))\leq -2\delta \ba(t),\quad \forall \ t\geq T_0.
\]
Integrating the last inequality yields
\[
{\mathbf E}(u(t))\leq {\mathbf E}(u(T_0))-2\delta\int_{T_0}^t \ba(s) ds, \quad \forall \ t\geq T_0.
\]
Owing to \eqref{int-a}, there exists $T>T_0$ such that 
\begin{equation}
\label{E-negative}
 {\mathbf E}(u(t))<0,\quad \forall \ t\geq T.  
\end{equation}
Finally, combining \eqref{Ener}, \eqref{Iu}, \eqref{I-delta} and \eqref{E-negative} yields
\[
\delta\leq {\mathbf I}(u(t))\leq {\mathbf E}(u(t))<0, \quad \forall \ t\geq T.
\]
This obviously leads to a contradiction.
\end{proof}

\bigskip 

Using a change of unknowns $v(t,x)={\rm e}^{\baa(t)}u(t,x)$, where $\baa(t)$ is given by \eqref{A}, the equation \eqref{main} can be written as
\begin{equation}\label{main-bis}
\left\{
\begin{array}{l}
{\rm i}\partial_t v+\Delta v =-{\rm e}^{(1-p)\baa(t)}|v|^{p-1}v,\\
v(0,x)=u_0(x).
\end{array}
\right.
\end{equation}
Obviously, the blow-up or the global existence of $v$ implies those of $u$, and vice-versa.

The local well-posedness of \eqref{main-bis} follows as for the classical NLS since $t \mapsto {\rm e}^{(1-p)\baa(t)}$ is bounded, see e.g. \cite{Cazenave}. Furthermore, the mass conservation  holds true for $v$, namely
\begin{equation}\label{mass-v}
    \mathbf{M}(v(t))=\mathbf{M}(u_0).
\end{equation}
Now, we define the Hamiltonian of $v$ by
\begin{equation}\label{Hamil-v}
    \mathbf{H}(v(t)):=\|\nabla v(t)\|_{L^2}^2-\frac{2}{p+1}{\rm e}^{(1-p)\baa(t)}\int_{\R^N}\, |v(t,x)|^{p+1}\,dx.
\end{equation}
A direct computation gives
\begin{equation}\label{Hamil-v-dt}
    \frac{d}{dt}\mathbf{H}(v(t))=\frac{2(p-1)}{p+1} \,\ba(t){\rm e}^{(1-p)\baa(t)}\int_{\R^N}\, |v(t,x)|^{p+1}\,dx.
\end{equation}
Integrating the above equation in time yields
\begin{equation}\label{Hamil-v-id}
    \mathbf{H}(v(t))=\mathbf{E}(u_0)+\frac{2(p-1)}{p+1}\int_0^t \ba(s){\rm e}^{(1-p)\baa(s)}\left(\int_{\R^N}\, |v(s,x)|^{p+1} dx\right)ds.
\end{equation}
Let us  also define the Hamiltonian of  $u$ by
\begin{equation}
    \label{tildeH}
{\bf \mathcal{H}}(u(t))=e^{2\baa(t)}{\mathbf E}(u(t))-\frac{2(p-1)}{p+1}\int_0^t \ba(s) e^{2\baa(s)}\|u(s)\|_{L^{p+1}}^{p+1} ds.
\end{equation}
Using \eqref{E-Id}, one can easily see that
\begin{equation}
\label{Ham-u}
\frac{d}{dt}{\bf \mathcal{H}}(u(t))=0.
\end{equation}
\section{Proofs of main results}
\label{S3}
\subsection{Proof of Theorem \ref{scat1}}
From \eqref{tildeH}-\eqref{Ham-u}, we have
\begin{equation}
\label{H-Id}
{\mathbf E}(u_0)=e^{2\baa(t)}{\mathbf E}(u(t))-\frac{2(p-1)}{p+1}\int_0^t \ba(s) e^{2\baa(s)}\|u(s)\|_{L^{p+1}}^{p+1} ds,
\end{equation}
where $\baa(t)$ and ${\mathbf E}(u(t))$ are given by \eqref{A} and \eqref{Ener}, respectively.\\
Using the mass identity \eqref{M-Id} and the Gagliardo-Nirenberg inequality \eqref{Nirenberg}, the equation \eqref{H-Id} gives
\begin{equation}
\label{Pre-Gron}
\begin{array}{rcl}
{\mathbf E}(u_0)&\geq& e^{2\baa(t)}\|\nabla u(t)\|_{L^2}^2-\frac{2K}{p+1}e^{2\baa(t)}\|u(t)\|_{L^2}^{p+1-\sigma} \ \|\nabla u(t)\|_{L^2}^{\sigma}\\
&-&\frac{2K(p-1)}{p+1}\int_0^t\, \ba(s) e^{2\baa(s)} \|u(s)\|_{L^2}^{p+1-\sigma} \ \|\nabla u(s)\|_{L^2}^{\sigma} ds,
\end{array}
\end{equation}
where $\sigma$ is given by \eqref{AB}.\\
Thanks to \eqref{M-Id}, the inequality \eqref{Pre-Gron} yields
\begin{equation*}
        f(t) \le C + g(t) (f(t))^{\beta} + \int_0^t h(s)(f(s))^{\beta} ds, \quad \forall \ t \ge 0,
    \end{equation*}
    where
    \begin{eqnarray*}
        f(t)&=&e^{2\baa(t)}\|\nabla u(t)\|_{L^2}^2,\\
        g(t)&=&\frac{2K}{p+1}\|u_0\|_{L^2}^{p+1-\sigma} e^{-(p-1)\baa(t)},\\
        h(t)&=&\frac{2K(p-1)}{p+1}\|u_0\|_{L^2}^{p+1-\sigma}\,\ba(t) e^{-(p-1)\baa(t)},\\
        C&=&{\mathbf E}(u_0),\\
        \beta&=&\frac{\sigma}{2}.
    \end{eqnarray*}
Since $p\leq 1+\frac{4}{N}$ and the damping $\ba(t)$ satisfies \eqref{int-a}\footnote{In fact, the assumption \eqref{int-a} is only needed for the mass-critical case $p=1+\frac{4}{N}$.}, it is easy to check that all the hypotheses of Lemma \ref{gronwall} are fulfilled. Hence, the aforementioned lemma implies that 
\begin{equation*}
    \sup_{t\geq 0}\, e^{\baa(t)}\|\nabla u(t)\|_{L^2}<\infty.
\end{equation*}
Owing to \eqref{observ}, we deduce \eqref{Bound-Scatt}. Finally, by applying Lemma \ref{L2.5}, the proof of Theorem \ref{scat1} is achieved.
\subsection{Proof of Theorem \ref{scat3}}
First, we define 
\[
X(t)=e^{\baa(t)}\|\nabla u(t)\|_{L^2}.
\]
Then, employing the conservation property \eqref{Ham-u}, the definition of $\mathcal{H}$, given by \eqref{tildeH}, and the Gagliardo-Nirenberg inequality \eqref{Nirenberg}, we infer that
\begin{equation}
\label{H-ineq1}
\begin{split}
E(u_0)=\mathcal{H}(u(t))&\geq (X(t))^2-\frac{2K}{p+1}e^{2\baa(t)}\|u(t)\|_{L^2}^{p+1-\sigma}\|\nabla u(t)\|_{L^2}^{\sigma}\\
&-\frac{2K(p-1)}{p+1}\int_0^t \ba(s) e^{2\baa(s)}\|u(s)\|_{L^2}^{p+1-\sigma}\|\nabla u(s)\|_{L^2}^{\sigma} ds,
\end{split}
\end{equation}
where $\sigma$ is given by \eqref{AB}.
Thanks to \eqref{M-Id}, the inequality \eqref{H-ineq1} gives
\begin{equation}
\label{H-ineq2}
\begin{split}
(X(t))^2&\leq E(u_0)+\frac{2K}{p+1}e^{(1-p)\baa(t)}\|u_0\|_{L^2}^{p+1-\sigma}(X(t))^{\sigma}\\
&+\frac{2K(p-1)}{p+1}\|u_0\|_{L^2}^{p+1-\sigma}\int_0^t \ba(s) e^{(1-p)\baa(s)}(X(s))^{\sigma} ds.
\end{split}
\end{equation}
Define, for $t\geq 0$, 
\[
X^*(t)=\sup_{0\leq s\leq t} X(s).
\]
Then, for $0<\tau<t$, the estimate \eqref{H-ineq2} (written at time $\tau$) gives
\begin{equation}
\label{H-ineq3}
 (X(\tau))^2 \leq E(u_0)+ \frac{2K}{p+1}\|u_0\|_{L^2}^{p+1-\sigma}(X(\tau))^{\sigma}+\frac{2K}{p+1}\|u_0\|_{L^2}^{p+1-\sigma}(X^*(\tau))^{\sigma},
\end{equation}
where we have used $(p-1)\int_0^t \ba(s) e^{(1-p)\baa(s)} ds\leq 1$.\\
Taking the supremum over $0<\tau<t$ in \eqref{H-ineq3} and using the fact that $X^*(\tau)\leq X^*(t)$ together with $E(u_0)\leq \|\nabla u_0\|_{L^2}^2$, we obtain
\begin{equation}
\label{H-ineq4}
(X^*(t))^2 \leq \|\nabla u_0\|_{L^2}^2+ C(X^*(t))^{\sigma},
\end{equation}
where $C=\frac{4K}{p+1}\|u_0\|_{L^2}^{p+1-\sigma}$.\\
Since $\sigma=\frac{N}{2}(p-1)>2$ and using the smallness of $\| \nabla u_0\|_{L^2}\,\|u_0\|_{L^2}^{\frac{2+N-(N-2)p}{N(p-1)-4}} $ which guarantees \eqref{boots2}, Lemma \ref{boots} (with $\theta=\sigma/2$) applied to \eqref{H-ineq4} yields \eqref{Decay-u}. Hence, thanks to the simple observation $\baa (t) \ge \underline{\ba} t$, the estimate \eqref{Decay-u} implies \eqref{Bound-Scatt} which concludes the scattering by application of Lemma \ref{L2.5}.

This completes the proof of Theorem \ref{scat3}.

\subsection{Proof of Theorem \ref{scat3-bis}}
 From the definition \eqref{Iu}, we have
 \[
 \|\nabla u(t)\|_{L^2}^2={\mathbf I}(u(t))+\|u(t)\|_{L^{p+1}}^{p+1}.
 \]
Owing to \eqref{hyp-sup}, we infer that
\[
e^{-2\underline{\ba}\, t}\|\nabla u(t)\|_{L^2}^2\leq e^{-2\underline{\ba}\, t}{\mathbf I}(u(t))+C e^{-2\underline{\ba}\, t},
\]
where $\underline{\ba}>0$ is given by \eqref{ais}.
Taking the limit inferior, as $t\to\infty$, of both sides in the above inequality yields
\begin{equation}
\label{liminf}
\displaystyle\liminf_{t\to\infty}\left(e^{-2\underline{\ba}\, t}\|\nabla u(t)\|_{L^2}^2\right)\leq \liminf_{t\to\infty}\left(e^{-2\underline{\ba}\, t}{\mathbf I}(u(t))\right)+\limsup_{t\to\infty}\left(C e^{-2\underline{\ba}\, t}\right).
\end{equation}
Employing \eqref{liminf-}, we conclude from \eqref{liminf} that
\[
0\leq \liminf_{t\to\infty}\left(e^{-2\underline{\ba}\, t}\|\nabla u(t)\|_{L^2}^2\right)\leq \liminf_{t\to\infty}\left(e^{-2\underline{\ba}\, t}{\mathbf I}(u(t))\right)\leq 0.
\]
Consequently, we obtain
\begin{equation}
    \label{liminf-0}
    \liminf_{t\to\infty}\left(e^{-\underline{\ba}\, t}\|\nabla u(t)\|_{L^2}\right) =0.
\end{equation}
From \eqref{liminf-0} and \eqref{M-Id}, we have
\begin{equation}
    \label{liminf-1}
    \liminf_{t\to\infty}\left(e^{-\underline{\ba}\, t}\| u(t)\|_{H^1}\right) =0.
\end{equation}
The above assertion clearly implies \eqref{exp-scat}. Now, by Sobolev embedding and \eqref{observ}, we infer that
\begin{eqnarray*}
\|U_{\ba}(t)u(T)\|_{L^\theta(T,\infty;~L^{r_0})} &\lesssim& \left\|e^{-\baa(t)} \|u(T)\|_{H^1}\right\|_{L^\theta(T,\infty)} \\
&\lesssim& e^{-\underline{\ba}\, T} \,\|u(T)\|_{H^1} 
\lesssim \varepsilon.  
\end{eqnarray*}
Therefore, the assumption of Proposition \ref{Oh-To} is fulfilled at the initial time $t=T$, and its conclusion holds true.

This achieves the proof of Theorem \ref{scat3-bis}.

\section{Conclusion and Open Problems}

In this work, we have examined the long-time behavior of solutions to the focusing nonlinear Schr\"odinger equation with a time-dependent damping term in the energy sub-critical regime. Under non-classical assumptions on the damping term, we have obtained some scattering results in the energy space.

Our work extends previous results on the constant damping case to the more general setting of time-dependent damping, providing new insights into the interplay between the damping term and the initial data.

More precisely, for small initial data in the sense of \eqref{small-new}, global solutions exist and scatter in $H^{1}(\mathbb{R}^{N})$ for the intercritical regime. Additionally, we showed that if the solution remains bounded in $L^{p+1}$, then it scatters in $H^{1}(\mathbb{R}^{N})$. Our results highlight the importance of the damping term in ensuring global existence and scattering as the condition $\underline{\mathbf{a} }> 0$ plays a crucial role in guaranteeing the decay of the solution.

Despite the significant progress achieved in this study, several  open questions  remain unresolved. Our results require $\underline{\mathbf{a}} > 0$ for scattering. It would be interesting to investigate whether scattering can still occur when $\underline{\mathbf{a}} = 0$, particularly for damping terms that decay slowly in time, such as in \eqref{Main-New}. A recent progress in this direction was done in \cite{Bamri, Tayachi-23}. The general case $a, \theta>0$ in \eqref{Main-New} remains largely unexplored. We aim in \cite{HM-New} to provide some insights on the scattering behavior for \eqref{Main-New}.

Furthermore, the case $\underline{\mathbf{a}} < 0$ remains an intriguing open question to be investigated. Nevertheless, as discussed in Remark \ref{Rem2.3} $(iv)$, we conjecture that the assumption $\underline{\mathbf{a}} > 0$ can be relaxed in some particular settings, requiring only that $\mathbf{a}(t) > 0$ holds for sufficiently large times.

Another important direction is to extend our results to the {\it energy-critical case}. The critical regime often exhibits different behavior, and the presence of damping could lead to new phenomena. Some progress in this direction is under investigation in \cite{HMS-New}.

In conclusion, our work contributes to the understanding of the long-time dynamics of the unsteady damped NLS equation. By establishing scattering results under non-classical assumptions, we have extended the existing studies and highlighted the importance of the damping term in controlling the behavior of solutions. 
\vspace{0.5cm}

\hrule

\vspace{0.5cm}

\end{document}